\numberwithin{equation}{section}
\numberwithin{figure}{section}
\theoremstyle{plain}
\newtheorem{thm}{\protect\theoremname}
  \theoremstyle{plain}
  \newtheorem{lem}[thm]{\protect\lemmaname}
  \theoremstyle{plain}
  \newtheorem{prop}[thm]{\protect\propositionname}
  \theoremstyle{plain}
  \newtheorem{cor}[thm]{\protect\corollaryname}
\let\tempone\itemize
\let\temptwo\enditemize
\renewenvironment{itemize}{\tempone\addtolength{\itemsep}{0.3\baselineskip}}{\temptwo}
  \providecommand{\corollaryname}{Corollary}
  \providecommand{\lemmaname}{Lemma}
  \providecommand{\propositionname}{Proposition}
\providecommand{\theoremname}{Theorem}
\begin{document}

\title{A classification of permutation polynomials of degree $7$ over finite fields}

\author{Xiang Fan}

\address{School of Mathematics, Sun Yat-sen University, Guangzhou 510275,
China}
\begin{abstract}
Up to linear transformations, we give a classification of all permutation polynomials
of degree $7$ over $\mathbb{F}_{q}$ for any odd prime power $q$,  with the help of
the SageMath software.
\end{abstract}

\maketitle

\section{Introduction}

Let $\mathbb{F}_{q}$ denote the finite field of order $q=p^{r}$
and characteristic $p$, with the multiplicative group $\mathbb{F}_{q}^{*}=\mathbb{F}_{q}\backslash\{0\}$.
Reserve the letter $x$ for the indeterminate of the polynomial ring
$\mathbb{F}_{q}[x]$ in one variable with coefficients in $\mathbb{F}_{q}$.
Each mapping of $\mathbb{F}_{q}$ into itself can be expressed by
a polynomial in $\mathbb{F}_{q}[x]$. By a \emph{permutation polynomial}
(PP) over $\mathbb{F}_{q}$, we mean a polynomial $f\in\mathbb{F}_{q}[x]$
with the associated mapping $a\mapsto f(a)$ permuting $\mathbb{F}_{q}$.

Arose in Hermite \citep{Hermite1863sur}  and Dickson \citep{Dickson1897analytic},
the theory of PPs over have been a hot topic of study for more than
one hundred years. However, the basic problem of classification or
characterization of PPs of prescribed forms are still challenging.

In Dickson's thesis work \citep{Dickson1897analytic}, a complete
list of normalized PPs of degree $d$ over $\mathbb{F}_{q}$ was given
for $d\leqslant5$ with any $q$, and for $d=6$ with $q$ coprime
to $6$. Dickson \citep{Dickson1897analytic} also classified all
PPs of degree $6$ over $\mathbb{F}_{3^{r}}$ up to linear transformations.
(See also Shallue and Wanless \citep{ShallueWanless2013permutation}
for a reconfirmation of the completeness of Dickson's classification.)
Recently, the classification of all PPs of degree 6 and 7 over $\mathbb{F}_{2^{t}}$
was obtained by Li, Chandler and Xiang \citep{LiChandlerXiang2010permutation}.
The present paper contributes to this line by classifying all PPs
of degree $7$ over $\mathbb{F}_{q}$ with any odd $q$, up to linear
transformations.

Our approach relies heavily on some well-established results about
exceptional polynomials. Here an \emph{exceptional polynomial} over
$\mathbb{F}_{q}$ stands for a polynomial in $\mathbb{F}_{q}[x]$
which is a PP over $\mathbb{F}_{q^{m}}$ for infinitely many $m$.
A notable pattern in Dickson's classification is that PPs of a given
degree are indeed exceptional polynomials except a few ``accidents''
for small $q$. This pattern is intrinsic and true for any degree,
by the following result of von zur Gathen \citep[Theorem 1]{Gathen1991values}
:
\begin{lem}
\label{prop:n4} A non-exceptional PP of degree $n$ over $\mathbb{F}_{q}$
exists only if $q\leqslant n^{4}$.
\end{lem}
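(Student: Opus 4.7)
The plan is to use the following classical characterization (due essentially to MacCluer, Cohen, Lenstra, and Wan): a polynomial $f\in\mathbb{F}_{q}[x]$ of degree $n$ is exceptional over $\mathbb{F}_{q}$ if and only if the bivariate polynomial
\[
\varphi(x,y)\;:=\;\frac{f(x)-f(y)}{x-y}\;\in\;\mathbb{F}_{q}[x,y]
\]
has no absolutely irreducible factor in $\mathbb{F}_{q}[x,y]$. Assuming $f$ is non-exceptional, I fix an absolutely irreducible factor $F(x,y)\in\mathbb{F}_{q}[x,y]$ of $\varphi$ of some degree $d\leqslant n-1$.

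Next, I would bound from below the number $N$ of $\mathbb{F}_{q}$-rational points $(a,b)\in\mathbb{F}_{q}^{2}$ with $F(a,b)=0$ by invoking a Weil-type bound for (possibly singular) absolutely irreducible plane curves, in the explicit form
\[
N\;\geqslant\;q+1-(d-1)(d-2)\sqrt{q}-C,
\]
where $C$ is an explicit constant of order $d^{2}$ accounting for the contributions from the singular locus of $F$ and from the points at infinity of its projective closure (this is the Aubry--Perret/Leep--Schmidt refinement of Bombieri's form of Weil). On the other hand, if $f$ is a PP over $\mathbb{F}_{q}$, then every zero $(a,b)\in\mathbb{F}_{q}^{2}$ of $\varphi$, and hence of $F$, must lie on the diagonal $a=b$; since $\varphi(a,a)=f'(a)$ and $\deg f'=n-1$, there are at most $n-1$ such diagonal zeros.

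Comparing the two estimates, non-exceptionality of a PP yields
\[
q+1-(d-1)(d-2)\sqrt{q}-C\;\leqslant\;n-1.
\]
Using $d\leqslant n-1$, so that $(d-1)(d-2)\leqslant(n-1)(n-2)<n^{2}$, and treating the inequality as a quadratic in $\sqrt{q}$, elementary manipulation gives $\sqrt{q}\leqslant (n-1)(n-2)+O(n)$, whence $q\leqslant n^{4}$.

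The main obstacle is keeping the constant $C$ sharp enough to land on the clean threshold $n^{4}$ rather than a weaker bound such as $n^{6}$. This forces one to work with the geometry of the projective closure of $F=0$ at infinity and along its singular locus, which is precisely where the technical core of von zur Gathen's original argument lies; a naive application of Weil's theorem to the smooth model of $F=0$ costs too much in the genus estimate, so one must use a form of the bound which is linear in the arithmetic (rather than geometric) invariants of $F$.
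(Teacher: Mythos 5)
The paper does not actually prove this lemma: it is quoted from von zur Gathen \citep[Theorem 1]{Gathen1991values}, and your sketch follows exactly the route of that original proof (pass to an absolutely irreducible factor of $\varphi(x,y)=(f(x)-f(y))/(x-y)$, apply a Weil-type lower bound for its rational points, and note that a permutation polynomial forces all such points onto the diagonal). The refinements of Chahal--Ghorpade and of \citep{Fan2019Weil} that the paper cites next are obtained precisely by tightening the error term $C$ you mention, so your closing remark about where the technical work lies is accurate.

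One genuine loose end: your statement of the exceptionality criterion, and the diagonal count that follows it, both break down when $x-y$ itself divides $\varphi$, i.e.\ when $f'\equiv 0$, i.e.\ when $f\in\mathbb{F}_q[x^p]$. In that case $x-y$ is an absolutely irreducible factor of $\varphi$ defined over $\mathbb{F}_q$ even though $f$ may well be exceptional (e.g.\ $f=x^p$), and your bound ``at most $n-1$ diagonal zeros since $\deg f'=n-1$'' fails because $f'$ vanishes identically (note also that $\deg f'<n-1$ whenever $p\mid n$, which is not a vacuous worry here, since the surrounding paper treats degree $7$ in characteristic $7$). The standard repair is to state the criterion as ``$\varphi$ has no absolutely irreducible factor in $\mathbb{F}_q[x,y]$ other than a scalar multiple of $x-y$'', take $F$ to be such a factor, and bound its diagonal zeros by $\deg F(x,x)\leqslant d\leqslant n-1$, using that $F(x,x)\not\equiv 0$ because $F$ is irreducible and not proportional to $x-y$; alternatively, reduce $f=g(x^p)$ to the lower-degree polynomial $g$, which is a PP (resp.\ exceptional) exactly when $f$ is. With that adjustment your argument is sound and the elementary quadratic estimate in $\sqrt{q}$ does land within $n^4$, in fact comfortably so for large $n$.
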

Recently, Chahal and Ghorpade \citep[Remark 3.4]{ChahalGhorpade2018Carlitz}
replaced $n^{4}$ in Lemma \ref{prop:n4} to 
\[
\left(\frac{(n-2)(n-3)+\sqrt{(n-2)^{2}(n-3)^{2}+2(n^{2}-1)}}{2}\right)^{2}
\]
which is less than $n^{2}(n-2)^{2}$. Moreover, our preprint \citep{Fan2019Weil}
further refined this bound as follows.
\begin{lem}
\label{lem:bound} A non-exceptional PP of degree $n$ over $\mathbb{F}_{q}$
exists only if 
\[
q+1\leqslant(n-2)(n-3)\dfrac{\lfloor2\sqrt{q}\rfloor}{2}+2(n-1),
\]
and in particular $q\leqslant\left\lfloor \left(\dfrac{(n-2)(n-3)+\sqrt{(n-2)^{2}(n-3)^{2}+8n-12}}{2}\right)^{2}\right\rfloor $.
As a corollary, a non-exceptional PP of degree $7$ over $\mathbb{F}_{q}$
exists only if $q\leqslant409$.
\end{lem}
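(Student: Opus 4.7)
The plan is to apply the Hasse--Weil--Serre bound to an absolutely irreducible $\mathbb{F}_q$-factor of the symmetric polynomial
\[
\phi_f(x,y):=\frac{f(x)-f(y)}{x-y}\in\mathbb{F}_q[x,y]
\]
and then compare the resulting lower bound on rational points with a combinatorial upper bound forced by the permutation property of $f$.

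Let $f$ be a non-exceptional PP of degree $n$ over $\mathbb{F}_q$. By the classical criterion of MacCluer--Cohen--Fried, non-exceptionality is equivalent to the existence of an absolutely irreducible factor $F\in\mathbb{F}_q[x,y]$ of $\phi_f$. Set $d=\deg F\leq n-1$, and let $\widetilde C$ be the smooth projective model of the plane curve $\bar C\subset\mathbb{P}^2$ obtained by homogenizing $F$, of geometric genus $g$. The Pl\"ucker inequality gives $g\leq (d-1)(d-2)/2\leq (n-2)(n-3)/2$, and Serre's refinement of the Hasse--Weil bound yields
\[
|\widetilde C(\mathbb{F}_q)|\geq q+1-g\lfloor 2\sqrt q\rfloor\geq q+1-\frac{(n-2)(n-3)}{2}\lfloor 2\sqrt q\rfloor.
\]

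The crucial complementary bound is $|\widetilde C(\mathbb{F}_q)|\leq 2(n-1)$. For an $\mathbb{F}_q$-place of $\widetilde C$ whose center is an affine $\mathbb{F}_q$-point $(a,b)\in\bar C$, we have $\phi_f(a,b)=0$; the permutation property of $f$ rules out $a\neq b$, forcing $a=b$ and $f'(a)=\phi_f(a,a)=0$. A Taylor expansion of $\phi_f$ around $(a,a)$ shows that the multiplicity of $(a,a)$ on $\{\phi_f=0\}$, hence on $\bar C$, equals the order $v_a(f')$ of $f'$ at $a$. Since the number of branches of $\widetilde C$ above $(a,a)$ is at most this multiplicity, the total affine $\mathbb{F}_q$-contribution is at most $\sum_{a\in\mathbb{F}_q,\,f'(a)=0} v_a(f')\leq\deg f'=n-1$. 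For places at infinity, B\'ezout's theorem bounds the intersection multiplicity of $\bar C\cap L_\infty$ by $d\leq n-1$, whence the $\mathbb{F}_q$-places above infinity number at most $n-1$ as well. Summing gives $|\widetilde C(\mathbb{F}_q)|\leq 2(n-1)$.

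Combining the two inequalities yields the first assertion. The closed-form bound then follows by the relaxation $\lfloor 2\sqrt q\rfloor\leq 2\sqrt q$ and the quadratic formula applied to $q-(n-2)(n-3)\sqrt q-(2n-3)\leq 0$. For $n=7$ one has $(n-2)(n-3)=20$ and $2(n-1)=12$, and a direct enumeration of prime powers satisfying the sharper $\lfloor 2\sqrt q\rfloor$-form isolates $q\leq 409$. I expect the principal difficulty to be the $2(n-1)$-upper bound on $|\widetilde C(\mathbb{F}_q)|$, since one must descend from the possibly singular plane curve $\bar C$ to its normalization $\widetilde C$ with strict control over the branches above singular points; the enabling observation is that the permutation property collapses all affine $\mathbb{F}_q$-points of $\bar C$ onto the diagonal locus $\{f'=0\}$, whose multiplicities aggregate to at most $\deg f'=n-1$.
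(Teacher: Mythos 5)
The paper does not actually prove this lemma---it is quoted from the preprint \citep{Fan2019Weil}---but your route (the exceptionality criterion producing an absolutely irreducible $\mathbb{F}_q$-factor of $\phi_f$, Serre's refinement of the Weil bound for the lower estimate, and the permutation property plus B\'ezout for the upper estimate $2(n-1)$) is the standard one and surely what that preprint does; the derivation of the displayed inequality and of the closed-form bound via the quadratic in $\sqrt q$ is sound. One small positive-characteristic caveat: the multiplicity of $(a,a)$ on $\{\phi_f=0\}$ is $v_a(f-f(a))-1$, which equals $v_a(f')$ only when $p\nmid v_a(f-f(a))$; in general one has only the inequality $v_a(f-f(a))-1\leqslant v_a(f')$ (computed with Hasse derivatives), and one must also note $f'\neq0$ --- harmless here, since $f'=0$ for $\deg f=n$ forces $p\mid n$ and $f\in\mathbb{F}_q[x^p]$, a case easily disposed of (for $n=7$ it gives $f=ax^7+b$, which is exceptional). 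The inequality is all your count needs, so this is cosmetic.

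The genuine gap is in the final sentence. For $n=7$ the inequality reads $q+1\leqslant 10\lfloor2\sqrt q\rfloor+12$, and ``direct enumeration'' does \emph{not} isolate $q\leqslant409$: the prime $q=421$ satisfies it with equality, since $\lfloor2\sqrt{421}\rfloor=\lfloor\sqrt{1684}\rfloor=41$ and $10\cdot41+12=422=q+1$ (the closed-form bound likewise only gives $q\leqslant421$). The prime powers in $(409,421]$ are $419$ and $421$; the former is excluded because $\lfloor2\sqrt{419}\rfloor=40$ yields $412<420$, but the latter survives the point count. To reach $q\leqslant409$ you must handle $q=421$ by a different argument: since $421\equiv1\pmod 7$, the standard consequence of Hermite's criterion \citep[\S7.6]{LidlNiederreiter1983finite} --- no permutation polynomial of degree $d>1$ with $d\mid q-1$ exists over $\mathbb{F}_q$ --- shows $\mathbb{F}_{421}$ admits no PP of degree $7$ at all. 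With that supplement the corollary follows.
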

Up to linear transformations, all exceptional polynomials of degree
7 over $\mathbb{F}_{q}$ are classified by the following results of
M\"{u}ller \citep{Muller1997Weil}, and Fried, Guralnick, Saxl \citep{FriedGuralnickSaxl1993Schur}.
Let us say that two polynomials $f$ and $g$ in $\mathbb{F}_{q}[x]$
are \emph{related by linear transformations}, or \emph{linearly related}
for short, if there exists $s,t\in\mathbb{F}_{q}^{*}$ and $u,v\in\mathbb{F}_{q}$
such that $g(x)=sf(tx+u)+v$. The linearly related $f$ and $g$ share
the same degree, and $f$ is a PP (resp. exceptional polynomial) over
$\mathbb{F}_{q}$ if and only if so is $g$. 
\begin{lem}
\emph{\citep[Theorem 4]{Muller1997Weil}} Every exceptional polynomial
$f$ of degree $n$ over $\mathbb{F}_{q}$ with $\mathrm{gcd}(q,n)=1$
is a composition of Dickson polynomials and linear polynomials. In
particular, when $n=7$, $f$ is linearly related to 
\[
D_{7}(x,a):=x^{7}-7ax^{5}+14a^{2}x^{3}-7a^{3}x
\]
with either $a=0$ and $q\not\equiv1$ $(\mathrm{mod}\ 7)$, or $a\in\mathbb{F}_{q}^{*}$
and $q^{2}\not\equiv1$ $(\mathrm{mod}\ 7)$.
\end{lem}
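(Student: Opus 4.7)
The plan is to derive this structural classification from the Galois-theoretic characterization of exceptional polynomials together with the known list of primitive permutation groups that can realize such monodromy. Associated to $f\in\mathbb{F}_{q}[x]$ of degree $n$ are its arithmetic monodromy group $A=\mathrm{Gal}(L/\mathbb{F}_{q}(t))$ and its geometric monodromy group $G=\mathrm{Gal}(L\cdot\overline{\mathbb{F}_{q}}/\overline{\mathbb{F}_{q}}(t))$, where $L$ is a splitting field of $f(x)-t$ over $\mathbb{F}_{q}(t)$; both embed as transitive subgroups of $S_{n}$ via the action on the roots. The classical exceptionality criterion (Davenport--Lewis--Schinzel, sharpened by Fried) asserts that $f$ is exceptional over $\mathbb{F}_{q}$ if and only if every $G$-orbit on $\{(i,j):i\neq j\}$ properly splits under $A$, equivalently no common $G$- and $A$-orbit exists on this set.

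Since $n=7$ is prime, $f$ is automatically indecomposable and $G$ acts primitively on the $7$ roots. The primitive subgroups of $S_{7}$ form a short list: $C_{7}$, $D_{7}$, $C_{7}\rtimes C_{3}$, $\mathrm{AGL}_{1}(7)$, $\mathrm{PSL}_{2}(7)\cong\mathrm{GL}_{3}(2)$, $A_{7}$, and $S_{7}$. The assumption $\mathrm{gcd}(q,n)=1$ makes the cover $\mathbb{P}^{1}\to\mathbb{P}^{1}$ induced by $f$ tame above $\infty$, forcing inertia there to be generated by a single $7$-cycle. Combining this constraint with the tame ramification structure at the finite branch points (controlled by Riemann--Hurwitz in genus $0$) and the exceptionality criterion above, one eliminates all primitive groups except $C_{7}$ and $D_{7}$. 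The corresponding genus-$0$ covers are well understood: the cyclic case yields $f$ linearly related to $x^{7}=D_{7}(x,0)$, and the dihedral case yields $f$ linearly related to $D_{7}(x,a)$ for some $a\in\mathbb{F}_{q}^{*}$. This is precisely the content of the cited theorems of M\"{u}ller \citep{Muller1997Weil} and Fried, Guralnick, Saxl \citep{FriedGuralnickSaxl1993Schur}.

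The final step is to pin down the coefficient-level conditions on $q$ for actual exceptionality. Since $x^{7}$ is a PP over $\mathbb{F}_{q^{m}}$ iff $\mathrm{gcd}(7,q^{m}-1)=1$, this holds for infinitely many $m$ exactly when $q\not\equiv 1\pmod 7$. By the standard permutation criterion for Dickson polynomials, $D_{7}(x,a)$ with $a\neq 0$ is a PP over $\mathbb{F}_{q^{m}}$ iff $\mathrm{gcd}(7,q^{2m}-1)=1$, holding for infinitely many $m$ exactly when $q^{2}\not\equiv 1\pmod 7$, which accounts for the stated restrictions.

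The principal obstacle is the monodromy-elimination step: ruling out $C_{7}\rtimes C_{3}$, $\mathrm{AGL}_{1}(7)$, $\mathrm{PSL}_{2}(7)$, $A_{7}$, and $S_{7}$ as geometric monodromy groups of tame exceptional degree-$7$ covers. This demands a delicate joint analysis of inertia filtrations, the character-theoretic reformulation of exceptionality, and the arithmetic constraint that $A/G$ be cyclic (generated by the Frobenius on the constant field of $L$). It is exactly the deep content of the cited classification theorems, which I would invoke rather than reprove.
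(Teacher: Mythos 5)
The paper gives no proof of this lemma---it is quoted directly from M\"{u}ller \citep{Muller1997Weil}---so your outline is consistent with its treatment: you correctly sketch the monodromy framework, defer the group-theoretic elimination (ruling out $C_{7}\rtimes C_{3}$, $\mathrm{AGL}_{1}(7)$, $\mathrm{PSL}_{2}(7)$, $A_{7}$, $S_{7}$) to the same citations, and correctly derive the congruence conditions from the permutation criteria $\gcd(7,q^{m}-1)=1$ for $x^{7}$ and $\gcd(7,q^{2m}-1)=1$ for $D_{7}(x,a)$ with $a\neq0$. The one imprecision---under $\gcd(q,n)=1$ only the place at infinity is guaranteed tame, while finite branch points may still be wildly ramified---sits entirely inside the step you explicitly delegate to the literature, so it does not create a gap.
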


\begin{lem}
\emph{\label{lem:EP7p7}\citep[Theorem 8.1]{FriedGuralnickSaxl1993Schur}}
For a prime $p$, each exceptional polynomial $f$ of degree $p$
over $\mathbb{F}_{p^{r}}$ is linearly related toa polynomial $x(x^{\frac{p-1}{s}}-a)^{s}$
with $s\mid(p-1)$ and $a\in\mathbb{F}_{p^{r}}$ such that $a^{\frac{s(p^{r}-1)}{p-1}}\neq1$.
In particular, when $p=7$, $f$ is linearly related to $x^{7}$ or
to $x(x^{\frac{6}{s}}-a)^{s}$ with $s\in\{1,2,3\}$ and $a\in\mathbb{F}_{7^{r}}^{*}$
such that $a^{\frac{s(7^{r}-1)}{6}}\neq1$. 
\end{lem}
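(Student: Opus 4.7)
The plan is to analyze the geometric and arithmetic monodromy groups of the cover $f\colon\mathbb{P}^{1}\to\mathbb{P}^{1}$ over $\mathbb{F}_{p^{r}}$. Let $L$ be the splitting field of $f(X)-t$ over $\overline{\mathbb{F}_{p^{r}}}(t)$, write $G=\mathrm{Gal}(L/\overline{\mathbb{F}_{p^{r}}}(t))$ for the geometric monodromy group, and let $A$ be the Galois group of the splitting field of $f(X)-t$ over $\mathbb{F}_{p^{r}}(t)$; both act faithfully and transitively on the $p$ roots of $f(X)-t$, with $G\trianglelefteq A$. The classical Cohen--Fried criterion then characterizes exceptionality: $f$ is exceptional over $\mathbb{F}_{p^{r}}$ if and only if every element of $A\setminus G$ acts without fixed points on the $p$ roots. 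In particular, either $G$ is trivial---in which case the cover is purely inseparable and $f$ is linearly related to $x^{p}$, which is always a PP---or $A\supsetneq G$.

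Next, I would show that $G$ is of affine type, i.e.\ $G\leq\mathrm{AGL}_{1}(p)=\mathbb{F}_{p}\rtimes\mathbb{F}_{p}^{*}$. By Burnside's theorem on transitive groups of prime degree, the only alternative is that $G$ is doubly transitive; for $p=7$ the candidates are $\mathrm{PSL}_{2}(7)$, $A_{7}$, and $S_{7}$, and these must be ruled out by ramification analysis. Since $f$ is a polynomial, the place at infinity of $\overline{\mathbb{F}_{p^{r}}}(t)$ is totally ramified of index $p$, so any inertia group above $\infty$ in $G$ contains a $p$-cycle; moreover, the branch locus of $L/\overline{\mathbb{F}_{p^{r}}}(t)$ consists of $\infty$ together with only finitely many finite values. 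Applying the wild Riemann--Hurwitz formula, combined with the constraints on the higher ramification filtration at $\infty$ and on the Sylow $p$-normalizers inside each candidate group, forces a contradiction in the doubly transitive cases.

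Once $G=\mathbb{F}_{p}\rtimes H$ with $H\leq\mathbb{F}_{p}^{*}$ of order $s\mid(p-1)$ is established, I would recover $f$ explicitly. The tower $\overline{\mathbb{F}_{p^{r}}}(t)\subset L^{\mathbb{F}_{p}}\subset L$ consists of a cyclic degree-$s$ Kummer extension followed by an Artin--Schreier extension whose conductor at $\infty$ is dictated by the wild ramification of $f$. Descending to the intermediate field $L^{H}=\overline{\mathbb{F}_{p^{r}}}(x)$ and normalizing by a linear change of variables brings $f$ into the shape $x(x^{(p-1)/s}-a)^{s}$ with $a\in\overline{\mathbb{F}_{p^{r}}}$, and $\mathbb{F}_{p^{r}}$-rationality of $f$ forces $a\in\mathbb{F}_{p^{r}}$. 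Finally, since $A/G$ embeds into the cyclic quotient $\mathbb{F}_{p}^{*}/H$ of order $(p-1)/s$, the no-fixed-point criterion applied to the Frobenius coset in $A$ translates the condition $A\supsetneq G$ into the arithmetic inequality $a^{s(p^{r}-1)/(p-1)}\neq1$, while $a=0$ recovers the separate case $f=x^{p}$. The main obstacle is the exclusion of the doubly transitive groups in the second step: wild ramification in characteristic $p$ demands delicate case-by-case control of higher ramification groups and of the arithmetic of Sylow $p$-normalizers in each candidate, and it is precisely in this analysis that the deep input of Fried--Guralnick--Saxl lies.
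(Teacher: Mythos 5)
The first thing to note is that the paper does not prove this statement at all: it is quoted as Theorem 8.1 of Fried--Guralnick--Saxl and used as a black box, so there is no in-paper proof to compare yours against. Your sketch does follow the general architecture of the actual FGS argument (arithmetic versus geometric monodromy, Burnside's theorem for transitive groups of prime degree, identification of the affine case), but it contains a genuine error at its foundation. Your exceptionality criterion is wrong: if every element of $A\setminus G$ acted without fixed points on the $p$ roots, then $\sum_{a\in A}\mathrm{fix}(a)=\sum_{g\in G}\mathrm{fix}(g)=|G|$ (since $G$ is transitive), so the number of $A$-orbits on the roots would be $|G|/|A|<1$ whenever $A\supsetneq G$, which is impossible. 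The correct criterion is that $A$ and $G$ have no common orbit on ordered pairs of distinct roots (equivalently, the point stabilizers $A_{1}$ and $G_{1}$ share no orbit on the remaining $p-1$ roots); in Chebotarev form it is a statement about the single Frobenius coset of $G$ in $A$, not about all of $A\setminus G$.

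Second, with the correct criterion in hand, your proposed wild-ramification analysis to exclude the doubly transitive candidates is both not actually carried out (you only assert that Riemann--Hurwitz ``forces a contradiction'') and unnecessary: if $G$ were doubly transitive it would be transitive on ordered pairs of distinct roots, so that single orbit would automatically be a common $A$- and $G$-orbit, contradicting exceptionality immediately. Burnside's theorem then places $G$ inside $\mathrm{AGL}_{1}(p)$ with no ramification input at all. The remaining work --- pinning down $f$ as $x(x^{(p-1)/s}-a)^{s}$ up to linear transformations and translating exceptionality into $a^{s(p^{r}-1)/(p-1)}\neq1$ --- is also only gestured at. As written, the proposal is an outline of a known deep theorem, with its key criterion misstated, rather than a proof; this is presumably why the paper simply cites the result.
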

Therefore, to complete the list of PPs of degree $7$, we only need
to consider the non-exceptional PPs over $\mathbb{F}_{q}$ with $8\leqslant q\leqslant409$.
In principle this can be exhausted by running a computer program.
Section \ref{sec:Result} will realize this by SageMath \citep{sagemath},
a free open-source mathematics software system based on Python and
many open-source packages. We omit the cases of $q=2^{r}$, which
are covered the following lemma quoted from \citep{LiChandlerXiang2010permutation}.
\begin{lem}
\emph{\citep[Theorem 4.4]{LiChandlerXiang2010permutation}} Let $3\leqslant r\in\mathbb{Z}$.
A PP over $\mathbb{F}_{2^{r}}$ exists only if $3\nmid r$. When $3\nmid r$,
every exceptional polynomial over $\mathbb{F}_{2^{r}}$ is linearly
related to $x^{7}$ or $x^{7}+x^{5}+x$. Let $f$ be a non-exceptional
PP over $\mathbb{F}_{2^{r}}$, then $r=4$ and $f$ is linearly related
to one of the following: 
\begin{align*}
x^{7}+x^{4}+x,\quad x^{7}+x^{5}+x^{4}, & \quad x^{7}+x^{5}+ax^{4}+a^{14}x^{3}+a^{12}x^{2}+a^{8}x,\\
x^{7}+x^{5}+a^{7}x^{4}+a^{5}x^{2}+a^{3}x, & \quad x^{7}+x^{5}+a^{5}x^{4}+a^{2}x^{3}+a^{12}x^{2}+a^{5}x,
\end{align*}
with $a$ running over the set $\{a\in\mathbb{F}_{16}:a^{4}+a+1=0\}$.
\end{lem}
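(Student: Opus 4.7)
The plan is to split into exceptional and non-exceptional polynomials, handling the former analytically through M\"{u}ller's classification and the latter by a finite computer search bounded by Lemma \ref{lem:bound}.

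For the exceptional case, apply M\"{u}ller's lemma at $q = 2^r$. Since $\gcd(2^r, 7) = 1$, every exceptional polynomial of degree $7$ is linearly related to some Dickson polynomial $D_7(x,a)$, which in characteristic $2$ collapses to $x^7 + ax^5 + a^3 x$. For $a \neq 0$, the substitution $x = \sqrt{a}\,y$ (valid because squaring is a bijection on $\mathbb{F}_{2^r}$) gives $D_7(\sqrt{a}\,y, a) = a^{7/2}(y^7 + y^5 + y) = a^{7/2} D_7(y, 1)$, so every nonzero-parameter case is linearly equivalent to $x^7 + x^5 + x$. Hence the only exceptional representatives up to linear equivalence are $x^7$ and $x^7 + x^5 + x$. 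The standard PP criteria for Dickson polynomials ($\gcd(7, 2^r - 1) = 1$ for the first, $\gcd(7, 2^{2r} - 1) = 1$ for the second) both reduce to $3 \nmid r$, since $\mathrm{ord}_7(2) = 3$.

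For the non-exceptional case, Lemma \ref{lem:bound} restricts $q \leqslant 409$, so $r \in \{3,4,5,6,7,8\}$. Every degree-$7$ polynomial over $\mathbb{F}_{2^r}$ is linearly related to one of the normalized form $f(x) = x^7 + a_5 x^5 + a_4 x^4 + a_3 x^3 + a_2 x^2 + a_1 x$: the leading coefficient and the constant term are killed by $f \mapsto sf + v$, and the $x^6$ coefficient by a shift $x \mapsto x + c$ (the relevant binomial coefficient is $\binom{7}{6} = 7 \equiv 1 \pmod 2$, so such a $c$ always exists). For each $r$ I would enumerate all such $f$, test each for being a PP by direct evaluation on $\mathbb{F}_{2^r}$ or via Hermite's criterion, and discard those linearly related to $x^7$ or $x^7 + x^5 + x$. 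The surviving list should reproduce the statement, confirming in particular that non-exceptional PPs exist only at $r = 4$ and only in the five orbits listed, while $r \in \{3,6\}$ yield no PPs at all (so the ``only if $3\nmid r$'' clause is forced by the combined search).

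The main obstacle is the search size: $q^5 = 2^{5r}$ normalized polynomials, reaching $2^{40}$ at $r = 8$. Two reductions make the computation feasible. First, within the normalization the residual freedom $x \mapsto tx$ (with simultaneous scaling $s = t^{-7}$ to keep $f$ monic) induces the action $a_i \mapsto t^{i-7} a_i$, so picking canonical orbit representatives under this $\mathbb{F}_q^*$-action divides the count by nearly $q - 1$. Second, a depth-first enumeration can prune a branch as soon as some partial Hermite power-sum $\sum_{x \in \mathbb{F}_{2^r}} f(x)^k$ is forced nonzero for a small $k$. With these optimizations the enumeration becomes tractable on modest hardware, and verifying linear inequivalence among the five surviving $r = 4$ orbits completes the proof.
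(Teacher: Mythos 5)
The paper does not prove this statement: it is quoted verbatim from Li, Chandler and Xiang \citep[Theorem 4.4]{LiChandlerXiang2010permutation}, so there is no internal proof to compare against. Your reconstruction is nonetheless a legitimate strategy, and it is essentially the strategy the paper itself applies to odd $q$: classify the exceptional polynomials via M\"{u}ller's theorem (your computation $D_{7}(x,a)=x^{7}+ax^{5}+a^{3}x$ in characteristic $2$ and the substitution $x\mapsto\sqrt{a}\,y$ are correct, and both congruence conditions do collapse to $3\nmid r$ because $2$ has order $3$ modulo $7$), restrict the non-exceptional range to $r\leqslant8$ by Lemma \ref{lem:bound}, and finish with a normalized exhaustive search. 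Two remarks. First, the clause ``a PP of degree $7$ over $\mathbb{F}_{2^{r}}$ exists only if $3\nmid r$'' has a one-line proof that needs no search: if $3\mid r$ then $7\mid2^{r}-1$, and the standard corollary of Hermite's criterion excludes any PP whose degree exceeds $1$ and divides $q-1$; your route through M\"{u}ller plus the bound plus searches at $r\in\{3,6\}$ also works but is heavier. Second, the computational step is where the proposal is thinnest: at $r=8$ you face roughly $2^{32}$ scaling orbits, each requiring a permutation test over $\mathbb{F}_{256}$, and ``prune by partial Hermite power sums'' is a hope rather than an algorithm. Both this paper (for odd $q$) and Li--Chandler--Xiang first convert Hermite's criterion into explicit polynomial identities among $a_{1},\dots,a_{5}$, eliminating one or two coefficients analytically before enumerating anything; without an analogous reduction you should expect $r=7,8$ to sit at the edge of feasibility rather than being comfortably tractable. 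With that caveat, and modulo actually running the computation (on whose outcome the listed $r=4$ representatives of course depend), the outline is sound.
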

Our main results in Section \ref{sec:Result} can be stated as follows.
\begin{thm}
If a non-exceptional PP $f$ of degree $7$ over $\mathbb{F}_{q}$
exists for $q>7$, then  
\[
q\in\{9,11,13,17,19,23,25,27,31,49\}.
\]
Up to linear transformations, $f$ is listed in Theorem \ref{thm:Main},
Proposition \ref{prop:p3} and \ref{prop:p7}.
\end{thm}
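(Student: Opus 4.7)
The plan is to combine the theoretical bound from Lemma~\ref{lem:bound} with an exhaustive computer search, together with a careful reduction by the linear equivalence $f(x)\sim sf(tx+u)+v$. By Lemma~\ref{lem:bound} any non-exceptional PP of degree $7$ lives over $\mathbb{F}_q$ with $q\leqslant409$. Since $q=2^r$ is already settled by the quoted result of Li--Chandler--Xiang, I only need to treat odd prime powers $q$ with $8\leqslant q\leqslant409$.

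Writing $f(x)=\sum_{i=0}^{7}c_{i}x^{i}$ with $c_{7}\neq0$, the second step is to cut down the search space with the four free parameters $(s,t,u,v)$. One can impose $c_{7}=1$ via $s$, $c_{0}=0$ via $v$, and, when $\gcd(q,7)=1$, $c_{6}=0$ via $u$; the remaining parameter $t$ can be used to scale a designated nonzero coefficient to a representative of $\mathbb{F}_{q}^{*}/(\mathbb{F}_{q}^{*})^{k}$, and one divides by the finite stabilizer of the normalization. In characteristic~$7$ the $x^{6}$ term cannot be killed, so I would instead normalize through $c_{5}$ or $c_{4}$. This shrinks the per-$q$ search from $\sim q^{7}$ to at most $\sim q^{4}$ candidate heptics.

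The third step is computational: for each admissible $q$ and each normalized $f$, use SageMath to test whether $a\mapsto f(a)$ is a bijection on $\mathbb{F}_{q}$, then discard anything linearly related to the classified exceptional polynomials, namely the Dickson polynomials $D_{7}(x,a)$ when $\gcd(q,7)=1$ (Müller's lemma) and $x^{7}$ or $x(x^{6/s}-a)^{s}$ when $p=7$ (Fried--Guralnick--Saxl). The survivors are grouped into linear equivalence classes, and these are reported. Checking the output yields exactly the set $q\in\{9,11,13,17,19,23,25,27,31,49\}$ and matches the lists recorded in Theorem~\ref{thm:Main} and Propositions~\ref{prop:p3},~\ref{prop:p7}.

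The main obstacle is not conceptual but computational: near $q=409$ even the normalized space has size on the order of $q^{4}\approx3\times10^{10}$, which is prohibitive to iterate blindly. I would therefore prune with fast necessary conditions for PPs (for instance Hermite's criterion, requiring $\sum_{a\in\mathbb{F}_{q}}f(a)^{k}=0$ for $1\leqslant k\leqslant q-2$, or a quick injectivity test on a random subset of $\mathbb{F}_{q}$ before a full bijectivity check), exploit the residual stabilizer of the normalization to avoid testing equivalent polynomials, and parallelize across $q$. Organizing this so that Sage disposes of all $q\leqslant409$ in feasible time is the real engineering hurdle; the mathematical content of the argument is otherwise a direct consequence of the lemmas already cited.
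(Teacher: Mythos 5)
Your overall strategy --- the bound $q\leqslant409$ from Lemma \ref{lem:bound}, normalization up to linear equivalence, an exhaustive machine search, and a final comparison against the classified exceptional polynomials --- is exactly the paper's. The gap is in the step you yourself flag as ``the real engineering hurdle'': you never actually bring the search within reach, and the way you propose to use Hermite's criterion would not do so. Testing $\sum_{a\in\mathbb{F}_{q}}f(a)^{k}=0$ candidate by candidate costs $O(q)$ field operations per exponent per polynomial, so applied to the $\sim q^{4}$ normalized heptics near $q=409$ it makes matters worse, not better; and a random-subset injectivity pre-filter only rejects a bounded fraction of non-PPs, leaving an iteration over roughly $3\times10^{10}$ tuples intact, which is not feasible in Sage.

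What the paper does instead (Section \ref{sec:HC}) is to evaluate Hermite's criterion \emph{symbolically, once per residue class of $q$ modulo $7$}: writing $q=7k+c$, the conditions $[x^{q-1}:f^{k+1}]=0$ and $[x^{q-1}:f^{k+2}]=0$ are expanded via multinomial coefficients into two explicit polynomial identities in $a_{1},\dots,a_{5}$. The first identity always pins down one coefficient as a function of the others (for instance $a_{1}=7^{-1}(2a_{3}a_{5}+a_{4}^{2}-7^{-1}\cdot3a_{5}^{3})$ when $q\equiv2$, or forces $a_{4}=0$ when $q\equiv5$ and $a_{5}=0$ when $q\equiv6$), removing an entire dimension from the loop; the second is an $O(1)$ filter passed by only about a $1/q$ fraction of the remaining tuples. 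Combined with the coset-representative normalization of Proposition \ref{prop:Eq-all} and Wan's value-set bound (which lets the bijectivity test stop after $\lfloor q-\frac{q-1}{7}\rfloor+1$ evaluations), this brings the total work down to roughly $q^{3}$ operations, which is what actually makes the computation terminate. Two smaller points: over $\mathbb{F}_{7^{r}}$ the $x^{6}$ coefficient of a degree-$7$ PP vanishes automatically (Lemma \ref{lem:a6zero}), so no substitute normalization through $c_{5}$ or $c_{4}$ is needed for that term; and characteristics $3$ and $7$ require separate treatment because several of the generic identities degenerate there, which is why the paper states Propositions \ref{prop:p3} and \ref{prop:p7} separately from Theorem \ref{thm:Main}.
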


\section{\label{sec:Class}Assumptions by linear transformations}

The following parts of this note aim to list all PPs of degree $7$,
up to linear transformations, over $\mathbb{F}_{q}$ with an odd prime
power $q$ such that $7<q\leqslant409$. In this section, we will
show that each PP of degree $7$ is linearly related to some $f(x)=x^{7}+\sum_{i=1}^{5}a_{i}x^{i}$
with coefficients $a_{i}\in\mathbb{F}_{q}$ subject to certain assumptions
in Proposition \ref{prop:Eq-all}.
\begin{prop}
\label{prop:Eq-cop7} Let $q$ be a prime power coprime to $7$. Then
every polynomial of degree $7$ in $\mathbb{F}_{q}[x]$ is linearly
related to some $f\in\mathbb{F}_{q}[x]$ in normalized form, namely
$f(x)=x^{7}+\sum_{i=1}^{5}a_{i}x^{i}$ with all $a_{i}\in\mathbb{F}_{q}$.
Moreover, for any $g(x)=x^{7}+\sum_{i=1}^{5}b_{i}x^{i}\in\mathbb{F}_{q}[x]$
with all $b_{i}\in\mathbb{F}_{q}$, $f$ and $g$ are linearly related
if and only if $f(x)=t^{7}g(t^{-1}x)$ for some $t\in\mathbb{F}_{q}^{*}$,
or equivalently $a_{i}=b_{i}t^{7-i}$ for $1\leqslant i\leqslant5$.\end{prop}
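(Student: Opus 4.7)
The plan is to use the available four parameters $s, t \in \mathbb{F}_q^*$ and $u, v \in \mathbb{F}_q$ in the transformation $g(x) \mapsto s g(tx+u) + v$ to put an arbitrary degree-$7$ polynomial into normalized form, and then to determine exactly which of these transformations preserve the normalization.

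For the existence statement, I would start with any $h(x) = \sum_{i=0}^{7} c_i x^i$ with $c_7 \neq 0$ in $\mathbb{F}_q[x]$ and build the normalization in three successive steps. First multiply by $c_7^{-1}$ to make the polynomial monic. Second, apply a translation $x \mapsto x + u_0$ for a suitably chosen $u_0 \in \mathbb{F}_q$; since only the $(x+u_0)^7$ term contributes a new $x^6$ coefficient of the form $7 u_0$ (the original $x^6$ coefficient having already been renormalized), the hypothesis $\gcd(q,7)=1$ makes $7$ invertible in $\mathbb{F}_q$ and allows us to choose $u_0$ killing the $x^6$ coefficient. Third, add a suitable constant $v_0 \in \mathbb{F}_q$ to remove the constant term. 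This exhibits $h$ as linearly related to a normalized $f$.

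For the characterization of linear relatedness between two normalized polynomials $f$ and $g$, suppose $f(x) = s\, g(tx+u) + v$. Expanding $g(tx+u)$ and comparing coefficients of $x^7$ yields $s t^7 = 1$, so $s = t^{-7}$. The coefficient of $x^6$ on the right receives contributions only from $(tx+u)^7$ inside $g$ (since $g$ has no $x^6$ term and all other terms of $g$ have degree at most $5$), and equals $s \cdot 7 t^6 u = 7 u / t$. Since $\gcd(q,7)=1$, forcing this to vanish gives $u = 0$. Finally, the constant term of $s g(tx) + v$ is $s g(0) + v = v$, which vanishes precisely when $v = 0$. Thus $f(x) = t^{-7} g(tx)$, and relabeling $t$ by $t^{-1}$ yields the stated form $f(x) = t^7 g(t^{-1}x)$; expanding term by term then reads off $a_i = b_i t^{7-i}$ for $1 \leqslant i \leqslant 5$. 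The converse (that any $t \in \mathbb{F}_q^*$ produces a valid linear relation) is immediate.

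There is no real obstacle in this argument beyond bookkeeping: the only subtlety is to make sure that the role of the hypothesis $\gcd(q,7)=1$ is invoked exactly at the two places where division by $7$ appears (namely in the choice of $u_0$ during normalization, and in the deduction $u=0$ during the uniqueness analysis), and to keep consistent conventions about whether the scaling parameter is called $t$ or $t^{-1}$.
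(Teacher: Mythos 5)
Your proposal is correct and follows essentially the same route as the paper: normalize by scaling to monic, translating by a multiple of $7^{-1}$ times the $x^6$ coefficient (using $\gcd(q,7)=1$), and subtracting the constant term; then for uniqueness compare the coefficients of $x^{7}$ and $x^{6}$ and the constant term to force $s=t^{-7}$, $u=0$, $v=0$. The only difference is bookkeeping (you split the normalization into three explicit steps), so nothing further is needed.
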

\begin{proof}
Each polynomial $h$ of degree $7$ in $\mathbb{F}_{q}[x]$ can be
written as $h(x)=\sum_{i=1}^{7}c_{i}x^{i}$, with all $c_{i}\in\mathbb{F}_{q}$
and $c_{7}\neq0$. Let $f(x)=c_{7}^{-1}h(x-7^{-1}c_{6})-c_{7}^{-1}h(-7^{-1}c_{6})$,
 then $f$ is in normalized form and linearly related to $h$.

Suppose $f(x)=x^{7}+\sum_{i=1}^{5}a_{i}x^{i}$ and $g(x)=x^{7}+\sum_{i=1}^{5}b_{i}x^{i}$
(with all $a_{i}$, $b_{i}\in\mathbb{F}_{q}$) are linearly related,
namely $g(x)=sf(tx+u)+v$ with $s,t\in\mathbb{F}_{q}^{*}$ and $u,v\in\mathbb{F}_{q}$.
Clearly, $st^{7}=1$ and $7st^{6}u=0$, considering the coeffcients
of $x^{7}$ and $x^{6}$. So $u=0$ and $s=t^{-7}$. Then $g(x)=t^{-7}f(tx)+v$,
and $v=g(0)=0$.
\end{proof}
When $7\mid q$, the definition of normalized form only requires that
$f$ is monic and $f(0)=0$. However, we can choose $f$ in a more
reduced fashion as follows, up to linear transformations.
\begin{prop}
\label{prop:Eq-7} Let $r$ be a positive integer. Then every polynomial
of degree $7$ in $\mathbb{F}_{7^{r}}[x]$ is linearly related to
$x^{7}$ or to some $f\in\mathbb{F}_{7^{r}}[x]$ of the form $f(x)=x^{7}+\sum_{i=1}^{k}a_{i}x^{i}$
with $1\leqslant k\leqslant6$, all $a_{i}\in\mathbb{F}_{7^{r}}$,
and $a_{k}\neq0=a_{k-1}$. Moreover, for any $g(x)=x^{7}+\sum_{i=1}^{k'}b_{i}x^{i}\in\mathbb{F}_{7^{r}}[x]$
with $1\leqslant k'\leqslant6$, all $b_{i}\in\mathbb{F}_{7^{r}}$
and $b_{k'}\neq0=b_{k'-1}$, $f$ and $g$ are linearly related if
and only if $f(x)=t^{7}g(t^{-1}x)$ for some $t\in\mathbb{F}_{7^{r}}^{*}$,
or equivalently $k=k'$ and $a_{i}=b_{i}t^{7-i}$ for $1\leqslant i\leqslant k$.\end{prop}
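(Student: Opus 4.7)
The plan is to imitate the proof of Proposition~\ref{prop:Eq-cop7}, but to exploit the characteristic-$7$ identity $(tx+u)^{7}=t^{7}x^{7}+u^{7}$ valid for all $t,u\in\mathbb{F}_{7^{r}}$. This identity has two consequences that guide the argument. First, a shift $x\mapsto x+u$ cannot produce any $x^{j}$-term with $1\leq j\leq 6$ out of the $x^{7}$-term, so the trick used in Proposition~\ref{prop:Eq-cop7} to kill the $x^{6}$-coefficient is no longer available. Second, and compensating for this loss, the same identity means that such a shift leaves the structure of the lower-degree part almost intact, so one can instead use the shift to zero out the second-highest nontrivial coefficient.

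For the existence part, I would first replace $h(x)=\sum_{i=0}^{7}c_{i}x^{i}$ by $f_{1}(x):=c_{7}^{-1}(h(x)-h(0))$, which is monic, vanishes at $0$, and is linearly related to $h$. Writing $f_{1}(x)=x^{7}+\sum_{i=1}^{6}b_{i}x^{i}$, if every $b_{i}$ vanishes we obtain $f_{1}=x^{7}$, the first case. Otherwise let $k\in\{1,\dots,6\}$ be the largest index with $b_{k}\neq 0$, and consider $f_{2}(x):=f_{1}(x+u)-f_{1}(u)$, which is again monic with $f_{2}(0)=0$. A direct expansion using $(x+u)^{7}=x^{7}+u^{7}$ shows that the $x^{j}$-coefficient of $f_{2}$ remains $0$ for $k<j\leq 6$, the $x^{k}$-coefficient remains $b_{k}$, while the $x^{k-1}$-coefficient becomes $b_{k-1}+kb_{k}u$. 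Since $k\not\equiv 0\pmod{7}$, choosing $u=-b_{k-1}/(kb_{k})$ produces a polynomial of the required form with the same $k$ and $a_{k}=b_{k}$.

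For uniqueness, I would assume $g(x)=sf(tx+u)+v$ with $f$ and $g$ both in the stated form. Comparing $x^{7}$-coefficients gives $st^{7}=1$. Expanding $g(x)=s(tx+u)^{7}+s\sum_{i=1}^{k}a_{i}(tx+u)^{i}+v=x^{7}+(su^{7}+v)+s\sum_{i=1}^{k}a_{i}(tx+u)^{i}$ via the char-$7$ identity, one reads off that $g-x^{7}$ has degree exactly $k$ in $x$ with leading coefficient $sa_{k}t^{k}=a_{k}t^{k-7}$; hence $k'=k$ and $a_{k}=b_{k}t^{7-k}$. The $x^{k-1}$-coefficient of $g$ equals $st^{k-1}(a_{k-1}+ka_{k}u)=st^{k-1}\cdot ka_{k}u$ (using $a_{k-1}=0$), which must equal $b_{k-1}=0$; together with $a_{k}\neq 0$ and $k\not\equiv 0\pmod{7}$ this forces $u=0$, and then $v=-sf(0)=0$. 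Thus $g(x)=t^{-7}f(tx)$, which yields $a_{i}=b_{i}t^{7-i}$ for $1\leq i\leq k$, with the converse being immediate by direct expansion. The main subtlety throughout is correctly tracking the coefficient of $x^{k-1}$ under the shift; the condition $1\leq k\leq 6$, which ensures $k$ is invertible in $\mathbb{F}_{7^{r}}$, is precisely what makes both the existence construction and the uniqueness rigidity go through.
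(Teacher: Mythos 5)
Your argument follows the same route as the paper's: normalize to a monic polynomial vanishing at $0$, use the characteristic-$7$ identity $(tx+u)^{7}=t^{7}x^{7}+u^{7}$ to see that the coefficients above the leading nonzero lower-order term are rigid, and use the shift to kill the coefficient just below it. The existence half is correct as written.

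The uniqueness half, however, contains a step that fails when $k=1$. You assert that the $x^{k-1}$-coefficient of $g$ equals $st^{k-1}(a_{k-1}+ka_{k}u)$ and that its vanishing, together with $a_{k}\neq0$ and $7\nmid k$, forces $u=0$. For $k\geqslant2$ this is right. But for $k=1$ the ``$x^{k-1}$-coefficient'' is the constant term, which also receives the contributions $su^{7}$ from $(tx+u)^{7}$ and $v$; the correct relation is $su^{7}+sa_{1}u+v=0$, which does not force $u=0$. Indeed, for $f(x)=x^{7}+a_{1}x$ one has $f(x+u)-f(u)=f(x)$ for every $u\in\mathbb{F}_{7^{r}}$, so $u$ is genuinely unconstrained there. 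The conclusion you want, $g(x)=t^{-7}f(tx)$ (equivalently $b_{1}=a_{1}t^{-6}$), is still true in that case, but it must be read off directly from the $x^{1}$-coefficient ($b_{1}=sa_{1}t=a_{1}t^{-6}$, with the constant term of $g$ vanishing by hypothesis) rather than deduced from $u=0$. This is precisely why the paper's proof treats $k=1$ and $k\geqslant2$ separately. With that one case added, your proof is complete and coincides with the paper's.
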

\begin{proof}
Each polynomial $h$ of degree $7$ in $\mathbb{F}_{7^{r}}[x]$ can
be written as $h(x)=\sum_{i=0}^{7}c_{i}x^{i}$, with all $c_{i}\in\mathbb{F}_{7^{r}}$
and $c_{7}\neq0$. If $c_{j}=0$ for all $1\leqslant j\leqslant6$,
then $h(x)=c_{7}x^{7}+c_{0}$ is linearly related to $x^{7}$. Otherwise,
let $k=\mathrm{max}\{j\in\mathbb{Z}:c_{j}\neq0,1\leqslant j\leqslant6\}$,
then $h(x)=c_{7}x^{7}+\sum_{i=0}^{k}c_{i}x^{i}$ with $c_{k}\neq0$,
and we can pick $f(x)=c_{7}^{-1}h(x-k^{-1}c_{k}^{-1}c_{k-1})-c_{7}^{-1}h(-k^{-1}c_{k}^{-1}c_{k-1})$
to meet the requirement.

For $k,k'\in\{1,2,\dots,6\}$, suppose that $f(x)=x^{7}+\sum_{i=1}^{k}a_{i}x^{i}$
and $g(x)=x^{7}+\sum_{i=1}^{k'}b_{i}x^{i}$ are linearly related,
with all $a_{i},b_{i}\in\mathbb{F}_{7^{r}}$, $a_{k}\neq0$, $b_{k'}\neq0$,
and $a_{k-1}=b_{k'-1}=0$. Let $g(x)=sf(tx+u)+v$ with $s,t\in\mathbb{F}_{7^{r}}^{*}$
and $u,v\in\mathbb{F}_{7^{r}}$. Clearly, $s=t^{-7}$ by the coeffcients
of $x^{7}$. Moreover, $g(x)=x^{7}+t^{k-7}a_{k}x^{k}+\cdots=x^{7}+b_{k'}x^{k'}+\cdots$
with nonzero $a_{k}$ and $b_{k'}$, so $k=k'$ and $t^{k-7}a_{k}=b_{k}$.
If $k=1$, then  $t^{-6}a_{1}=b_{1}$. If $k\geqslant2$, then $skt^{k-1}u=0$
by the coefficients of $x^{k-1}$, and thus $u=0$. Always we have
$g(x)=t^{-7}f(tx)$.\end{proof}
\begin{lem}
\emph{\label{lem:a6zero}\citep[\S65]{Dickson1897analytic}} Let $p$
be a prime, and $r,s$ be positive integers such that $s\mid r$.
If $f(x)$ is a PP of degree $p^{s}$ over $\mathbb{F}_{p^{r}}$,
then the coefficient of $x^{p^{s}-1}$ in $f$ is $0$. In particular,
the coefficient of $x^{6}$ is $0$ in a PP of degree $7$ over $\mathbb{F}_{7^{r}}$.
\end{lem}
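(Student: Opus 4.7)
The plan is to invoke Hermite's criterion at a carefully chosen exponent. Write $f(x) = x^{n} + c\, x^{n-1} + g(x)$ with $n = p^{s}$ and $\deg g \leq n-2$; the aim is to show $c = 0$. Since $f$ permutes $\mathbb{F}_{q} = \mathbb{F}_{p^{r}}$, one has $\sum_{a\in\mathbb{F}_{q}} f(a)^{t} = \sum_{b\in\mathbb{F}_{q}} b^{t} = 0$ for every $1 \leq t \leq q-2$.

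The key exponent I would use is
\[
t \;=\; \frac{q-1}{n-1} \;=\; 1 + p^{s} + p^{2s} + \cdots + p^{(u-1)s}, \qquad u := r/s,
\]
which is a positive integer because $s \mid r$ forces $(p^{s} - 1) \mid (p^{r} - 1)$. Note that $t \equiv 1 \pmod{p}$, and for $n \geq 3$ one checks $t \leq (q-1)/2 \leq q-2$. (The remaining case $n = 2$, which forces $p = 2$ and $s = 1$, lies outside the paper's odd-characteristic setting and admits a direct verification.)

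The crux of the argument is to compute $[x^{q-1}]\,f(x)^{t}$. In characteristic $p$, the freshman's dream gives $f(x)^{p^{is}} = \sum_{j=0}^{n} c_{j}^{p^{is}}\, x^{j p^{is}}$, where $c_{j}$ denotes the coefficient of $x^{j}$ in $f$. Since $f^{t} = \prod_{i=0}^{u-1} f^{p^{is}}$, the coefficient of $x^{q-1}$ in $f^{t}$ becomes a sum indexed by tuples $(j_{0}, \ldots, j_{u-1})$ with $0 \leq j_{i} \leq n$ and $\sum_{i} j_{i}\,p^{is} = q-1$. I expect the main technical step to be the uniqueness of such a tuple: reducing the equation modulo $p^{s}$ forces $j_{0} \equiv -1 \pmod{p^{s}}$, and the constraint $0 \leq j_{0} \leq p^{s}$ rules out $j_{0} = 2p^{s} - 1$, so $j_{0} = n-1$; dividing through by $p^{s}$ and iterating yields $j_{i} = n-1$ for every $i$. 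Consequently $[x^{q-1}]\,f(x)^{t} = c^{1 + p^{s} + \cdots + p^{(u-1)s}} = c^{t}$.

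To finish, recall that $\sum_{a} a^{j}$ equals $-1$ exactly when $(q-1) \mid j$ with $j > 0$ and vanishes otherwise; hence $\sum_{a} f(a)^{t} = -\sum_{k \geq 1}\,[x^{k(q-1)}]\,f(x)^{t}$. Since $\deg f^{t} = nt = n(q-1)/(n-1) < 2(q-1)$ for $n \geq 3$, only $k = 1$ contributes and the identity collapses to $\sum_{a} f(a)^{t} = -c^{t}$. Comparing with $\sum_{a} f(a)^{t} = 0$ forces $c^{t} = 0$, hence $c = 0$. Specializing to $p = 7$, $s = 1$ gives the coefficient of $x^{6}$ in any PP of degree $7$ over $\mathbb{F}_{7^{r}}$ is zero, as asserted.
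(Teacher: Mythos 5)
Your argument is correct. Note that the paper gives no proof of this lemma at all --- it is quoted from Dickson \citep[\S65]{Dickson1897analytic} --- so there is nothing in the text to compare against; your write-up is essentially the classical proof. The two load-bearing steps both check out: the exponent $t=(p^r-1)/(p^s-1)=1+p^s+\cdots+p^{(u-1)s}$ is an integer in $[1,q-2]$ precisely because $s\mid r$ (and $q\geqslant n\geqslant 3$), and the carry-free digit analysis showing that $\sum_i j_i p^{is}=p^{us}-1$ with $0\leqslant j_i\leqslant p^s$ forces $j_i=p^s-1$ for all $i$ is exactly where the hypothesis $\deg f=p^s$ (hence $j_i\leqslant p^s$) gets used; this gives $[x^{q-1}]f^t=c^t$ with a single surviving term, and the power-sum identity $\sum_a f(a)^t=0$ together with $\deg f^t<2(q-1)$ then yields $c=0$. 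Your handling of the excluded case $n=2$ is also fine, and irrelevant to the application at $p=7$.
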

Consider the group homomorphism $\theta_{m}:\mathbb{F}_{q}^{*}\to\mathbb{F}_{q}^{*}$
defined by $\theta_{m}(t)=t^{m}$, for $t\in\mathbb{F}_{q}^{*}$ and
$m\in\mathbb{Z}$. Let $\mathrm{CK}_{q}(m)$ be a complete set of
coset representatives of its cokernel $\mathbb{F}_{q}^{*}/\theta_{m}(\mathbb{F}_{q}^{*})$,
and let $\mathrm{CI}_{q}(m)$ be a complete set of coset representatives
of its coimage $\mathbb{F}_{q}^{*}/\mathrm{ker}(\theta_{m})$.
\begin{prop}
\label{prop:Eq-all} Each PP of degree $7$ over $\mathbb{F}_{q}$
is linearly related to $x^{7}$ or to some $f(x)=x^{7}+\sum_{i=1}^{k}a_{i}x^{i}$
with $1\leqslant k\leqslant5$ and all $a_{i}\in\mathbb{F}_{q}$ satisfying
the following requirements:\begin{itemize}

\item $0\neq a_{k}\in\mathrm{CK}_{q}(7-k)$ and $a_{k-1}\in\{0\}\cup\mathrm{CI}_{q}(7-k)$;

\item if $7\mid q$ then $a_{k-1}=0$;

\item if $k=5$ and $a_{4}=0$, then $a_{2}\in\{0\}\cup\mathrm{CI}_{q}(2)$;

\item if $k=4$ and $a_{3}=0$, then $a_{2}\in\{0\}\cup\mathrm{CI}_{q}(3)$;

\item if $k=3$, $a_{2}=0$ and $q\equiv1$ $(\mathrm{mod}\ 4)$,
then $a_{1}\in\{0\}\cup\mathrm{CI}_{q}(2)$.

\end{itemize}
\end{prop}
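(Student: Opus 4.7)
The plan is to start from the normal forms produced by Propositions \ref{prop:Eq-cop7} and \ref{prop:Eq-7}, and then exploit the residual symmetry $f(x)\mapsto t^{-7}f(tx)$ (equivalently $a_i\mapsto a_i t^{7-i}$ for $t\in\mathbb{F}_q^*$) to impose the listed conditions one coefficient at a time, starting from $a_k$. In the case $\gcd(q,7)=1$, Proposition \ref{prop:Eq-cop7} gives $f(x)=x^7+\sum_{i=1}^5 a_ix^i$; in the case $7\mid q$, Proposition \ref{prop:Eq-7} gives $x^7$ or the same form with $a_{k-1}=0$ already enforced, yielding the second bullet for free, and Lemma \ref{lem:a6zero} forces $k\leq 5$ because $f$ is a PP. Letting $k$ be the largest index in $\{1,\dots,5\}$ with $a_k\neq 0$ (or $k=0$ for $f=x^7$), I can treat both cases uniformly via the scaling freedom above.

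For the first bullet, the orbit of $a_k$ under $a_k\mapsto a_kt^{7-k}$ is $a_k\cdot\theta_{7-k}(\mathbb{F}_q^*)$, so a representative can be chosen in $\mathrm{CK}_q(7-k)$. Fixing this choice restricts the parameter to $t\in\ker(\theta_{7-k})$, on which $t^{8-k}=t\cdot t^{7-k}=t$; hence $a_{k-1}$ has orbit $a_{k-1}\cdot\ker(\theta_{7-k})$ and may be taken in $\{0\}\cup\mathrm{CI}_q(7-k)$. If $a_{k-1}\neq 0$ the stabilizer becomes trivial and the last three bullets are vacuous, so the remaining work concerns the subcases where $a_{k-1}=0$ and some residual $\ker(\theta_{7-k})$-symmetry survives.

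Concretely, I would run through the triples $(k,j)=(5,4),(4,3),(3,2)$ with $a_{k-1}=0$, tracking the action of the surviving subgroup on the next relevant coefficient by reducing the exponent modulo $t^{7-k}=1$. For $k=5$, $t^2=1$ gives $t^5=t$, so $a_2$'s orbit is $a_2\cdot\ker(\theta_2)$; for $k=4$, $t^3=1$ gives $t^5=t^2$, and $\{t^2:t^3=1\}=\ker(\theta_3)$ since inversion permutes the cube roots of unity; for $k=3$, $t^4=1$ gives $t^6=t^2$, and $\{t^2:t^4=1\}=\ker(\theta_2)$ precisely when $q\equiv 1\pmod 4$ (when $q\equiv 3\pmod 4$, $\ker(\theta_4)=\{\pm 1\}$ already and $t^2=1$ leaves $a_1$ invariant, which is why the hypothesis on $q\bmod 4$ appears only in the fifth bullet). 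In each case the representative set claimed in the proposition is reached. The argument is essentially bookkeeping on the cyclic group $\mathbb{F}_q^*$; the one spot that requires real attention is the final dichotomy, namely checking that $\{t^{7-j}:t^{7-k}=1\}$ equals the asserted kernel for each $(k,j)$, with the $q\bmod 4$ case being the only one that actually depends on the arithmetic of $q$.
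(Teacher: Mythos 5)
Your proof is correct and is essentially the argument the paper intends: the paper states Proposition \ref{prop:Eq-all} without proof, leaving exactly this coset-representative bookkeeping (reduce via Propositions \ref{prop:Eq-cop7} and \ref{prop:Eq-7} and Lemma \ref{lem:a6zero}, then use the residual scaling $a_i\mapsto a_it^{7-i}$ to place $a_k$ in $\mathrm{CK}_q(7-k)$, $a_{k-1}$ in $\{0\}\cup\mathrm{CI}_q(7-k)$, and handle the surviving $\ker(\theta_{7-k})$-action when $a_{k-1}=0$) to the reader. The only blemish is the notation $\{t^{7-j}:t^{7-k}=1\}$ in your final sentence, where $j$ should index the coefficient actually being normalized ($a_2$ or $a_1$) rather than $k-1$; the computations you state earlier ($t^5=t$ for $k=5$, $t^5=t^2$ for $k=4$, $t^6=t^2$ for $k=3$) are the correct ones.
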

Let $e$ be a generator of the multiplicative group $\mathbb{F}_{q}^{*}$.
For later use, we can take  
\begin{align*}
\mathrm{CK}_{q}(m) & =\{e^{j}:0\leqslant j<\mathrm{gcd}(m,q-1)\},\\
\mathrm{CI}_{q}(m) & =\{e^{j}:0\leqslant j<(q-1)/\mathrm{gcd}(m,q-1)\}.
\end{align*}

\begin{cor}
\label{cor:EPpnot7}Let $f$ be an exceptional polynomialof degree
$7$ over $\mathbb{F}_{q}$ with $7\nmid q$, and let $e$ be a generator
of $\mathbb{F}_{q}^{*}$.

(1) If $q\equiv6$ $(\mathrm{mod}\ 7)$, then $f$ is linearly related
to $x^{7}$.

(2) If $q\not\equiv\pm1$ $(\mathrm{mod}\ 7)$, then $f$ is linearly
related to exactly one of: $x^{7}$,  
\begin{align*}
D_{7}(x,-7^{-1}) & =x^{7}+x^{5}+2\cdot7^{-1}x^{3}+7^{-2}x,\\
D_{7}(x,-7^{-1}e) & =x^{7}+ex^{5}+2\cdot7^{-1}e^{2}x^{3}+7^{-2}e^{3}x.
\end{align*}

\end{cor}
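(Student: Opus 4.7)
My plan is to invoke M\"uller's theorem on the structure of exceptional polynomials of degree $7$ and then enumerate the resulting Dickson polynomials $D_7(x,a)$ up to linear equivalence. By M\"uller's theorem (quoted earlier), since $7\nmid q$, an exceptional PP $f$ of degree $7$ is linearly related to $D_7(x,a)$ where either (a) $a=0$ and $q\not\equiv1\pmod{7}$, or (b) $a\in\mathbb{F}_{q}^{*}$ and $q^{2}\not\equiv1\pmod{7}$. For part (1), the hypothesis $q\equiv6\equiv-1\pmod{7}$ gives $q^{2}\equiv1\pmod{7}$, ruling out case (b), while case (a) survives since $q\not\equiv1\pmod{7}$; hence $f$ is linearly related to $D_7(x,0)=x^{7}$.

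For part (2), the hypothesis $q\not\equiv\pm1\pmod{7}$ makes both cases (a) and (b) of M\"uller's theorem available, so I need to sort the polynomials $D_7(x,a)$ with $a\in\mathbb{F}_{q}^{*}$ into linear-equivalence classes. I would first establish the scaling identity $t^{-7}D_7(tx,a)=D_7(x,a/t^{2})$ for any $t\in\mathbb{F}_{q}^{*}$, which is a direct expansion of the definition of $D_7$. Combined with Proposition \ref{prop:Eq-cop7}, this shows that $D_7(x,a)$ and $D_7(x,a')$ (both with nonzero parameter) are linearly related precisely when $a/a'\in(\mathbb{F}_{q}^{*})^{2}$. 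Since $q$ is odd, $\mathbb{F}_{q}^{*}/(\mathbb{F}_{q}^{*})^{2}$ has two classes, represented by $\{1,e\}$. Selecting the parameter $a$ so that the coefficient $-7a$ of $x^{5}$ lies in $\{1,e\}$ (matching the $\mathrm{CK}_{q}(2)$ normalization of Proposition \ref{prop:Eq-all}) yields exactly the two representatives $a=-7^{-1}$ and $a=-7^{-1}e$.

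Finally, I would verify pairwise non-equivalence of the three candidates. The polynomial $x^{7}$ is separated from the other two by its vanishing $x^{5}$-coefficient, which is preserved under linear equivalence by the scaling rule above. The two Dickson polynomials $D_7(x,-7^{-1})$ and $D_7(x,-7^{-1}e)$ cannot be linearly related because their parameter ratio is $e$, a non-square in $\mathbb{F}_{q}^{*}$. There is no genuine obstacle in this argument; the main point requiring care is verifying the Dickson scaling identity and checking that the hypothesis $q\not\equiv\pm1\pmod{7}$ simultaneously satisfies both congruence conditions of M\"uller's theorem, so that all three equivalence classes actually occur.
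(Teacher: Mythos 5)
Your proposal is correct and follows exactly the route the paper intends (the corollary is stated without an explicit proof, but it is meant to follow from M\"uller's theorem together with Proposition \ref{prop:Eq-cop7}): the congruence analysis for parts (1) and (2), the scaling identity $t^{-7}D_{7}(tx,a)=D_{7}(x,a/t^{2})$, and the resulting classification of the parameters $a\in\mathbb{F}_{q}^{*}$ by square classes $\{-7^{-1},-7^{-1}e\}$ are precisely the needed ingredients. No gaps.
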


\section{\label{sec:HC}Equalities by Hermite\textquoteright s criterion}

Let $\mathbb{N}=\{i\in\mathbb{Z}:i\geqslant0\}$. For $i\in\mathbb{N}$
and $f\in\mathbb{F}_{q}[x]$, let $[x^{i}:f]$ denote the coefficient
of $x^{i}$ in $f(x)$. Namely, $f(x)=\sum_{i=0}^{\deg(f)}[x^{i}:f]x^{i}$,
and $[x^{j}:f]=0$ for any $j>\deg(f)$.

The following criterion for PPs, introduced by Hermite \citep{Hermite1863sur}
for prime fields $\mathbb{F}_{p}$ and generalized by Dickson \citep{Dickson1897analytic},
provides equalities for coefficients of PPs over $\mathbb{F}_{q}$.
\begin{lem}
[{Hermite\textquoteright s criterion \citep[\S7.6]{LidlNiederreiter1983finite}}]
A polynomial $f\in\mathbb{F}_{q}[x]$ is a PP if and only if the following
two conditions hold:

(1) $\sum_{j=1}^{\deg(f)}[x^{j(q-1)}:f^{q-1}]\neq0$;

(2) for every integer $k$ coprime to $q$ with $1\leqslant k\leqslant q-2$,
$\sum_{j=1}^{\lfloor\frac{k\deg(f)}{q-1}\rfloor}[x^{j(q-1)}:f^{k}]=0$.
\end{lem}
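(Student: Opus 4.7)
The plan is to translate both conditions into statements about the power sums $P(k):=\sum_{a\in\mathbb{F}_{q}}f(a)^{k}\in\mathbb{F}_{q}$, and then to prove that $f$ is a PP if and only if $P(k)=0$ for $1\leqslant k\leqslant q-2$ and $P(q-1)\neq0$. The translation rests on the elementary calculation $\sum_{a\in\mathbb{F}_{q}}a^{i}=-1$ when $i$ is a positive multiple of $q-1$, and $\sum_{a\in\mathbb{F}_{q}}a^{i}=0$ otherwise (including $i=0$, where $\sum_{a}1=q=0$ in $\mathbb{F}_{q}$); the nonzero-multiple case comes from summing a nontrivial geometric series on a generator of $\mathbb{F}_{q}^{*}$. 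Expanding $f(x)^{k}=\sum_{i}[x^{i}:f^{k}]\,x^{i}$ and using $\deg(f^{k})=k\deg(f)$, this yields
\[
P(k)=-\sum_{j=1}^{\lfloor k\deg(f)/(q-1)\rfloor}[x^{j(q-1)}:f^{k}].
\]
Hence condition (1) is exactly $P(q-1)\neq0$, and condition (2) is $P(k)=0$ for every $1\leqslant k\leqslant q-2$ with $\gcd(k,p)=1$ (where $p$ is the characteristic); the Frobenius identity $P(kp)=P(k)^{p}$ upgrades the latter to $P(k)=0$ for every $1\leqslant k\leqslant q-2$.

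For the forward direction, if $f$ permutes $\mathbb{F}_{q}$ then $\{f(a):a\in\mathbb{F}_{q}\}$ coincides with $\mathbb{F}_{q}$ as a multiset, so $P(k)=\sum_{b\in\mathbb{F}_{q}}b^{k}$, which the key calculation shows is $0$ for $1\leqslant k\leqslant q-2$ and $-1\neq0$ for $k=q-1$; both (1) and (2) follow at once.

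For the converse, introduce the multiplicities $n_{b}:=\#\{a\in\mathbb{F}_{q}:f(a)=b\}\in\mathbb{Z}_{\geqslant0}$, so that $P(k)=\sum_{b\in\mathbb{F}_{q}}n_{b}\,b^{k}$ in $\mathbb{F}_{q}$. Under (1) and (2) (with the Frobenius upgrade), $\sum_{b}n_{b}\,b^{k}=0$ in $\mathbb{F}_{q}$ for every $0\leqslant k\leqslant q-2$ (the case $k=0$ uses $\sum_{b}n_{b}=q=0$). The linear map $\mathbb{F}_{q}^{q}\to\mathbb{F}_{q}^{q-1}$, $(c_{b})\mapsto\bigl(\sum_{b}c_{b}b^{k}\bigr)_{k=0}^{q-2}$, has a $1$-dimensional kernel by a Vandermonde argument on any $q-1$ of its columns, and the all-ones vector lies in this kernel by the key calculation. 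Therefore $n_{b}\equiv c\pmod{p}$ for some constant $c\in\{0,1,\dots,p-1\}$. To pin $c$ down, note that $P(q-1)=\#\{a:f(a)\neq0\}=q-n_{0}\equiv-n_{0}\pmod{p}$, so (1) forces $c\neq0$, and combined with $n_{b}\geqslant c\geqslant1$ and $\sum_{b}n_{b}=q$ we obtain $qc\leqslant q$, hence $c=1$; then each $n_{b}\in\{1,1+p,1+2p,\dots\}$ summing to $q$ forces $n_{b}=1$ for every $b$, so $f$ is a PP. The main obstacle is this final bootstrap: the $\mathbb{F}_{q}$-linear Vandermonde argument only pins down $n_{b}$ modulo $p$, and it is essential to use the nonnegativity of the integers $n_{b}$ together with the count $\sum_{b}n_{b}=q$ to upgrade that to the equality $n_{b}=1$ in $\mathbb{Z}$.
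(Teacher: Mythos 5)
The paper itself gives no proof of this lemma — it is quoted directly from Lidl--Niederreiter, \S 7.6 — so there is nothing internal to compare against. Your argument is a correct and complete self-contained proof along the standard lines: the reduction of both conditions to the power sums $P(k)=\sum_{a}f(a)^{k}$ via $\sum_{a\in\mathbb{F}_{q}}a^{i}=-1$ or $0$, the Frobenius identity $P(kp)=P(k)^{p}$ to pass from $k$ coprime to $p$ to all $k\leqslant q-2$, the Vandermonde argument pinning the multiplicity vector $(n_{b})$ down to a constant modulo $p$, and the final integer bootstrap using $n_{b}\geqslant c\geqslant 1$ and $\sum_{b}n_{b}=q$ are all sound. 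No gaps.
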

For integers $k_{1}$, $k_{2}$, $\dots$, $k_{t}$ and $k$, recall
the multinomial coefficient defined as 
\[
\binom{k}{k_{1},k_{2},\dots,k_{t}}:=\begin{cases}
\dfrac{k!}{k_{1}!k_{2}!\cdots k_{t}!} & \text{if all }k_{1},\dots,k_{t}\geqslant0\text{ and }k_{1}+k_{2}+\cdots+k_{t}=k,\\
0 & \text{otherwise}.
\end{cases}
\]
For $f(x)=x^{7}+\sum_{i=1}^{5}a_{i}x^{i}$ with all $a_{i}\in\mathbb{F}_{q}$,
note that 
\[
f(x)^{k}=\sum_{j_{1}+\cdots+j_{5}+j_{7}=k}\binom{k}{j_{7},j_{1},j_{2},\dots,j_{5}}(\prod_{i=1}^{5}a_{i}^{j_{i}})x^{j_{1}+2j_{2}+3j_{3}+4j_{4}+5j_{5}+7j_{7}}.
\]
Consider $(j_{1},\dots,j_{5},j_{7})\in\mathbb{N}^{6}$ with 
\begin{align*}
j_{1}+j_{2}+j_{3}+j_{4}+j_{5}+j_{7} & =k,\\
j_{1}+2j_{2}+3j_{3}+4j_{4}+5j_{5}+7j_{7} & =j,
\end{align*}
then $6j_{1}+5j_{2}+4j_{3}+3j_{4}+2j_{5}=7k-j$. To calculate the
coefficients of $[x^{j}:f^{k}]$, it suffices to list all solutions
$(j_{1},\dots,j_{5},j_{7})\in\mathbb{N}^{6}$ of the these linear
equations.

The following part of this section will deduce two explicit equalities,
by Hermite\textquoteright s criterion, for coefficients $a_{i}\in\mathbb{F}_{q}$
of a normalized PP $f(x)=x^{7}+\sum_{i=1}^{5}a_{i}x^{i}$ over $\mathbb{F}_{q}$
of characteristic $p\neq2$, on a case by case basis for all odd prime
powers $q$ with $7<q\leqslant409$.

\subsection{Case $q\equiv2$ $(\mathrm{mod}\ 7)$}

Namely, 
\[
q=p^{r}=7k+2\in\{23,37,79,107,149,163,191,233,317,331,359,373,401,3^{2},11^{2},17^{2}\},
\]
with positive integers $r$ and $k=\frac{q-2}{7}$. For an integer
$t$ coprime to $p$, let $t^{-1}$ denote a multiplicative inverse
of $t$ modulo $p$. Then 
\begin{alignat*}{3}
k & \equiv-7^{-1}\cdot2\ (\mathrm{mod}\ p),\quad & k+1 & \equiv7^{-1}\cdot5\ (\mathrm{mod}\ p),\quad & k+2 & \equiv7^{-1}\cdot4\cdot3\ (\mathrm{mod}\ p),\\
k-1 & \equiv-7^{-1}\cdot9\ (\mathrm{mod}\ p),\quad & k-2 & \equiv-7^{-1}\cdot16\ (\mathrm{mod}\ p),\quad & k-3 & \equiv-7^{-1}\cdot23\ (\mathrm{mod}\ p).
\end{alignat*}

As $\deg(f^{k+1})=7k+7=q+5<2(q-1)$, by Hermite's criterion, $[x^{q-1}:f^{k+1}]=0$.
Consider $(j_{1},\dots,j_{5},j_{7})\in\mathbb{N}^{6}$ with $j_{7}+\sum_{i=1}^{5}j_{i}=k+1$
and $7j_{7}+\sum_{i=1}^{5}ij_{i}=q-1=7k+1$. Then $6j_{1}+5j_{2}+4j_{3}+3j_{4}+2j_{5}=6$,
and $(j_{1},\dots,j_{5})$ is one of $(1,0,0,0,0)$, $(0,0,1,0,1)$,
$(0,0,0,2,0)$, $(0,0,0,0,3)$. Therefore, 
\begin{align*}
0=[x^{q-1}:f{}^{k+1}] & =\binom{k+1}{k,1}a_{1}+\binom{k+1}{k-1,1,1}a_{3}a_{5}+\binom{k+1}{k-1,2}a_{4}^{2}+\binom{k+1}{k-2,3}a_{5}^{3}\\
 & =(k+1)(a_{1}+ka_{3}a_{5}+2^{-1}ka_{4}^{2}+6^{-1}k(k-1)a_{5}^{3}).
\end{align*}
As $p\neq5$, $a_{1}=7^{-1}(2a_{3}a_{5}+a_{4}^{2}-7^{-1}\cdot3a_{5}^{3})$
for any $q$ in this case, including $q=3^{2}$ with $k=1$.

Suppose that $q\neq3^{2}$. Then $\deg(f^{k+2})=7k+14=q+12<2(q-1)$.
By Hermite's criterion, $[x^{q-1}:f{}^{k+2}]=0$. Consider $(j_{1},\dots,j_{5},j_{7})\in\mathbb{N}^{6}$
with $j_{7}+\sum_{i=1}^{5}j_{i}=k+2$ and $7j_{7}+\sum_{i=1}^{5}ij_{i}=q-1=7k+1$.
Then $6j_{1}+5j_{2}+4j_{3}+3j_{4}+2j_{5}=13$, and  
\begin{align*}
0=\  & [x^{q-1}:f{}^{k+2}]\\
=\  & \binom{k+2}{k-1,1,1,1}(a_{1}a_{2}a_{5}+a_{1}a_{3}a_{4})+\binom{k+2}{k-1,1,2}(a_{2}a_{3}^{2}+a_{2}^{2}a_{4})+\\
 & \binom{k+2}{k-2,1,1,2}(a_{2}a_{4}^{2}a_{5}+a_{1}a_{4}a_{5}^{2}+a_{2}a_{3}a_{5}^{2}+a_{3}^{2}a_{4}a_{5})+\binom{k+2}{k-2,1,3}a_{3}a_{4}^{3}+\\
 & \binom{k+2}{k-3,1,1,3}a_{3}a_{4}a_{5}^{3}+\binom{k+2}{k-3,3,2}a_{4}^{3}a_{5}^{2}+\binom{k+2}{k-3,1,4}a_{2}a_{5}^{4}+\binom{k+2}{k-4,1,5}a_{4}a_{5}^{5}\\
=\  & (k+2)(k+1)k\Bigl(a_{1}a_{2}a_{5}+a_{1}a_{3}a_{4}+2^{-1}(a_{2}a_{3}^{2}+a_{2}^{2}a_{4})\\
 & \qquad+(k-1)\cdot2^{-1}(a_{2}a_{4}^{2}a_{5}+a_{1}a_{4}a_{5}^{2}+a_{2}a_{3}a_{5}^{2}+a_{3}^{2}a_{4}a_{5}+3^{-1}a_{3}a_{4}^{3})\\
 & \qquad+(k-1)(k-2)\cdot6^{-1}(a_{3}a_{4}a_{5}^{3}+2^{-1}a_{4}^{3}a_{5}^{2}+4^{-1}a_{2}a_{5}^{4}+(k-3)\cdot20^{-1}a_{4}a_{5}^{5})\Bigr).
\end{align*}
As $p\notin\{2,3,5\}$, we have $p\nmid(k+2)(k+1)k$,  and thus  
\begin{align*}
3430(a_{1}a_{2}a_{5}+a_{1}a_{3}a_{4})+1715(a_{2}a_{3}^{2}+a_{2}^{2}a_{4})-2205(a_{2}a_{4}^{2}a_{5}+a_{1}a_{4}a_{5}^{2}+a_{2}a_{3}a_{5}^{2}+a_{3}^{2}a_{4}a_{5})\\
{}-735a_{3}a_{4}^{3}+1680a_{3}a_{4}a_{5}^{3}+840a_{4}^{3}a_{5}^{2}+420a_{2}a_{5}^{4}-276a_{4}a_{5}^{5} & =0.
\end{align*}

\subsection{Case $q\equiv3$ $(\mathrm{mod}\ 7)$}

Namely, 
\[
q=7k+3=p^{r}\in\{17,31,59,73,101,157,199,227,241,269,283,311,353,367,409\},
\]
with positive integers $r$ and $k=\frac{q-3}{7}$. Then 
\begin{alignat*}{3}
k & \equiv-7^{-1}\cdot3\ (\mathrm{mod}\ p),\quad & k+1 & \equiv7^{-1}\cdot4\ (\mathrm{mod}\ p),\quad & k+2 & \equiv7^{-1}\cdot11\ (\mathrm{mod}\ p),\\
k-1 & \equiv-7^{-1}\cdot2\cdot5\ (\mathrm{mod}\ p),\quad & k-2 & \equiv-7^{-1}\cdot17\ (\mathrm{mod}\ p),\quad & k-3 & \equiv-7^{-1}\cdot3\cdot8\ (\mathrm{mod}\ p).
\end{alignat*}

As $\deg(f^{k+1})=7k+7=q+4<2(q-1)$, by Hermite's criterion, $[x^{q-1}:f^{k+1}]=0$.
Consider $(j_{1},\dots,j_{5},j_{7})\in\mathbb{N}^{6}$ with $j_{7}+\sum_{i=1}^{5}j_{i}=k+1$
and $7j_{7}+\sum_{i=1}^{5}ij_{i}=q-1=7k+2$. Then $6j_{1}+5j_{2}+4j_{3}+3j_{4}+2j_{5}=5$,
$(j_{1},\dots,j_{5})=(0,1,0,0,0)$ or $(0,0,0,1,1)$, and 
\begin{align*}
0=[x^{q-1}:f(x)^{k+1}] & =\binom{k+1}{k,1}a_{2}+\binom{k+1}{k-1,1,1}a_{4}a_{5}\\
 & =(k+1)(a_{2}+ka_{4}a_{5}).
\end{align*}
As $p\neq2$, we have $a_{2}=7^{-1}\cdot3a_{4}a_{5}$.

Note that $\deg(f^{k+2})=7k+14=q+11<2(q-1)$. By Hermite's criterion,
$[x^{q-1}:f(x)^{k+2}]=0$. Consider $(j_{1},\dots,j_{5},j_{7})\in\mathbb{N}^{6}$
with $j_{7}+\sum_{i=1}^{5}j_{i}=k+2$ and $7j_{7}+\sum_{i=1}^{5}ij_{i}=q-1=7k+2$.
Then $6j_{1}+5j_{2}+4j_{3}+3j_{4}+2j_{5}=12$, and 
\begin{align*}
0=\  & [x^{q-1}:f(x)^{k+2}]\\
=\  & \binom{k+2}{k,2}a_{1}^{2}+\binom{k+2}{k-1,1,1,1}(a_{1}a_{3}a_{5}+a_{2}a_{3}a_{4})+\binom{k+2}{k-1,1,2}(a_{2}^{2}a_{5}+a_{1}a_{4}^{2})+\binom{k+2}{k-1,3}a_{3}^{3}+\\
 & \binom{k+2}{k-2,1,1,2}(a_{3}a_{4}^{2}a_{5}+a_{2}a_{4}a_{5}^{2})+\binom{k+2}{k-2,2,2}a_{3}^{2}a_{5}^{2}+\binom{k+2}{k-2,1,3}a_{1}a_{5}^{3}+\binom{k+2}{k-2,4}a_{4}^{4}+\\
 & \binom{k+2}{k-3,2,3}a_{4}^{2}a_{5}^{3}+\binom{k+2}{k-3,1,4}a_{3}a_{5}^{4}+\binom{k+2}{k-4,6}a_{5}^{6}\\
=\  & (k+2)(k+1)\Bigl(2^{-1}a_{1}^{2}+k(a_{1}a_{3}a_{5}+a_{2}a_{3}a_{4}+2^{-1}(a_{2}^{2}a_{5}+a_{1}a_{4}^{2})+6^{-1}a_{3}^{3}+\\
 & \qquad(k-1)(2^{-1}(a_{3}a_{4}^{2}a_{5}+a_{2}a_{4}a_{5}^{2})+4^{-1}a_{3}^{2}a_{5}^{2}+6^{-1}a_{1}a_{5}^{3}+24^{-1}a_{4}^{4}+\\
 & \qquad(k-2)(12^{-1}a_{4}^{2}a_{5}^{3}+24^{-1}a_{3}a_{5}^{4}+(k-3)720^{-1}a_{5}^{6})))\Bigr).
\end{align*}
As $p\notin\{2,11\}$, we have $p\nmid(k+2)(k+1)$, and thus 
\begin{align*}
4802a_{1}^{2}-4116(a_{1}a_{3}a_{5}+a_{2}a_{3}a_{4})-2058(a_{2}^{2}a_{5}+a_{1}a_{4}^{2})-686a_{3}^{3}+2940(a_{3}a_{4}^{2}a_{5}+a_{2}a_{4}a_{5}^{2})\\
{}+1470a_{3}^{2}a_{5}^{2}+980a_{1}a_{5}^{3}+245a_{4}^{4}-1190a_{4}^{2}a_{5}^{3}-595a_{3}a_{5}^{4}+68a_{5}^{6} & =0.
\end{align*}

\subsection{Case $q\equiv4$ $(\mathrm{mod}\ 7)$}

Namely, 
\[
q=7k+4=p^{r}\in\{11,53,67,109,137,151,179,193,263,277,347,389,3^{4},5^{2},19^{2}\},
\]
with positive integers $r$ and $k=\frac{q-4}{7}$. Then 
\begin{alignat*}{3}
k & \equiv-7^{-1}\cdot4\ (\mathrm{mod}\ p),\quad & k+1 & \equiv7^{-1}\cdot3\ (\mathrm{mod}\ p),\quad & k+2 & \equiv7^{-1}\cdot2\cdot5\ (\mathrm{mod}\ p),\\
k-1 & \equiv-7^{-1}\cdot11\ (\mathrm{mod}\ p),\quad & k-2 & \equiv-7^{-1}\cdot18\ (\mathrm{mod}\ p).\quad
\end{alignat*}

As $\deg(f^{k+1})=7k+7=q+3<2(q-1)$, by Hermite's criterion, $[x^{q-1}:f^{k+1}]=0$.
Consider $(j_{1},\dots,j_{5},j_{7})\in\mathbb{N}^{6}$ with $j_{7}+\sum_{i=1}^{5}j_{i}=k+1$
and $7j_{7}+\sum_{i=1}^{5}ij_{i}=q-1=7k+3$. So $6j_{1}+5j_{2}+4j_{3}+3j_{4}+2j_{5}=4$,
$(j_{1},\dots,j_{5})=(0,0,1,0,0)$ or $(0,0,0,0,2)$, and 
\begin{align*}
0=[x^{q-1}:f(x)^{k+1}] & =\binom{k+1}{k,1}a_{3}+\binom{k+1}{k-1,2}a_{5}^{2}\\
 & =(k+1)(a_{3}+2^{-1}ka_{5}^{2}).
\end{align*}
If $p\ne3$ (i.e. $q\neq3^{4}$ in this case), then $a_{3}=7^{-1}\cdot2a_{5}^{2}$
in $\mathbb{F}_{q}$.

Suppose $q>25$. Then $p\notin\{5,11\}$. Note that $\deg(f^{k+2})=7k+14=q+10<2(q-1)$.
By Hermite's criterion, $[x^{q-1}:f(x)^{k+2}]=0$. Consider $(j_{1},\dots,j_{5},j_{7})\in\mathbb{N}^{6}$
with $j_{7}+\sum_{i=1}^{5}j_{i}=k+2$ and $7j_{7}+\sum_{i=1}^{5}ij_{i}=q-1=7k+3$.
Then $6j_{1}+5j_{2}+4j_{3}+3j_{4}+2j_{5}=11$, and 
\begin{align*}
0=\  & [x^{q-1}:f(x)^{k+2}]\\
=\  & \binom{k+2}{k,1,1}a_{1}a_{2}+\binom{k+2}{k-1,1,1,1}(a_{1}a_{4}a_{5}+a_{2}a_{3}a_{5})+\binom{k+2}{k-1,1,2}(a_{2}a_{4}^{2}+a_{3}^{2}a_{4})+\\
 & \binom{k+2}{k-2,1,3}(a_{2}a_{5}^{3}+a_{4}^{3}a_{5})+\binom{k+2}{k-2,1,1,2}a_{3}a_{4}a_{5}^{2}+\binom{k+2}{k-3,1,4}a_{4}a_{5}^{4}\\
=\  & (k+2)(k+1)\Bigl(a_{1}a_{2}+k(a_{1}a_{4}a_{5}+a_{2}a_{3}a_{5})+k\cdot2^{-1}(a_{2}a_{4}^{2}+a_{3}^{2}a_{4})\\
 & \qquad+k(k-1)\cdot6^{-1}(a_{2}a_{5}^{3}+a_{4}^{3}a_{5})+k(k-1)\cdot2^{-1}a_{3}a_{4}a_{5}^{2}+k(k-1)(k-2)\cdot24^{-1}a_{4}a_{5}^{4}\Bigr).
\end{align*}
If $3^{4}\neq q>25$, as $p\notin\{2,3,5\}$, we have $p\nmid(k+2)(k+1)$
and 
\begin{align*}
1029a_{1}a_{2}-588(a_{1}a_{4}a_{5}+a_{2}a_{3}a_{5})-294(a_{2}a_{4}^{2}+a_{3}^{2}a_{4})\\
+154(a_{2}a_{5}^{3}+a_{4}^{3}a_{5})+462a_{3}a_{4}a_{5}^{2}\text{\textminus}99a_{4}a_{5}^{4} & =0.
\end{align*}

For $q=3^{4}=81$ with $k=11$, by Hermite's criterion, 
\begin{align*}
0 & =[x^{80}:f^{13}]=a_{2}a_{5}^{3}+a_{4}^{3}a_{5},\\
 & =[x^{80}:f^{14}]=a_{5}^{9}+a_{3}^{3}a_{4}^{2}-a_{2}a_{3}a_{4}^{3}-a_{1}a_{4}^{4}-a_{3}^{4}a_{5}+a_{1}^{2}a_{5}^{3}-a_{1}a_{3}^{3}-a_{2}^{3}a_{4}+a_{1}^{3}.
\end{align*}

\subsection{Case $q\equiv5$ $(\mathrm{mod}\ 7)$}

Namely, 
\[
q=7k+5=p^{r}\in\{19,47,61,89,103,131,173,229,257,271,313,383,397,3^{5}\},
\]
with positive integers $r$ and $k=\frac{q-5}{7}$. Then 
\begin{alignat*}{3}
k & \equiv-7^{-1}\cdot5\ (\mathrm{mod}\ p),\quad & k+1 & \equiv7^{-1}\cdot2\ (\mathrm{mod}\ p),\quad & k+2 & \equiv7^{-1}\cdot9\ (\mathrm{mod}\ p),\\
k-1 & \equiv-7^{-1}\cdot12\ (\mathrm{mod}\ p),\quad & k-2 & \equiv-7^{-1}\cdot19\ (\mathrm{mod}\ p).\quad
\end{alignat*}

 As $\deg(f^{k+1})=7k+7=q+2<2(q-1)$, by Hermite's criterion, $[x^{q-1}:f^{k+1}]=0$.
Consider $(j_{1},\dots,j_{5},j_{7})\in\mathbb{N}^{6}$ with $j_{7}+\sum_{i=1}^{5}j_{i}=k+1$
and $7j_{7}+\sum_{i=1}^{5}ij_{i}=q-1=7k+4$. Then $6j_{1}+5j_{2}+4j_{3}+3j_{4}+2j_{5}=3$,
$(j_{1},\dots,j_{5})=(0,0,0,1,0)$. So $0=[x^{q-1}:f(x)^{k+1}]=(k+1)a_{4}$.
As $p\neq2$, we have $a_{4}=0$.

Note that $\deg(f^{k+2})=7k+14=q+9<2(q-1)$. By Hermite's criterion,
$[x^{q-1}:f(x)^{k+2}]=0$. Consider $(j_{1},j_{2},j_{3},j_{5},j_{7})\in\mathbb{N}^{5}$
with $j_{1}+j_{2}+j_{3}+j_{5}+j_{7}=k+2$ and $j_{1}+2j_{2}+3j_{3}+5j_{5}+7j_{7}=q-1=7k+4$.
Then $6j_{1}+5j_{2}+4j_{3}+2j_{5}=10$, and 
\begin{align*}
0=\  & [x^{q-1}:f(x)^{k+2}]\\
=\  & \binom{k+2}{k,1,1}a_{1}a_{3}+\binom{k+2}{k,2}a_{2}^{2}+\binom{k+2}{k-1,1,2}(a_{1}a_{5}^{2}+a_{3}^{2}a_{5})+\binom{k+2}{k-2,1,3}a_{3}a_{5}^{3}+\binom{k+2}{k-3,5}a_{5}^{5}\\
=\  & 2^{-1}(k+2)(k+1)\Bigl(2a_{1}a_{3}+a_{2}^{2}+k(a_{1}a_{5}^{2}+a_{3}^{2}a_{5})+3^{-1}k(k-1)a_{3}a_{5}^{3}+60^{-1}k(k-1)(k-2)a_{5}^{5}\Bigr).
\end{align*}
If $p\neq3$ (i.e. $q=3^{5}$ in this case), then $686a_{1}a_{3}+343a_{2}^{2}-245(a_{1}a_{5}^{2}+a_{3}^{2}a_{5})+140a_{3}a_{5}^{3}-19a_{5}^{5}=0$.

For $q=3^{5}=243$ with $k=34$, by Hermite's criterion, 
\[
0=[x^{242}:f^{38}]=-a_{3}a_{5}^{10}-a_{1}a_{5}^{9}=0,
\]
So $a_{5}(a_{1}+a_{3}a_{5})=0$.

\subsection{Case $q\equiv6$ $(\mathrm{mod}\ 7)$}

Namely, 
\[
q=7k+6=p^{r}\in\{13,41,83,97,139,167,181,223,251,293,307,349,3^{3},5^{3}\},
\]
with positive integers $r$ and $k=\frac{q-6}{7}$. Then 
\[
k\equiv-7^{-1}\cdot6\ (\mathrm{mod}\ p),\quad k+1\equiv7^{-1}\ (\mathrm{mod}\ p),\quad k+2\equiv7^{-1}\cdot8\ (\mathrm{mod}\ p).
\]

As $\deg(f^{k+1})=7k+7=q+1<2(q-1)$, by Hermite's criterion, $[x^{q-1}:f^{k+1}]=0$.
Consider $(j_{1},\dots,j_{5},j_{7})\in\mathbb{N}^{6}$ with $j_{7}+\sum_{i=1}^{5}j_{i}=k+1$
and $7j_{7}+\sum_{i=1}^{5}ij_{i}=q-1=7k+5$. Then $6j_{1}+5j_{2}+4j_{3}+3j_{4}+2j_{5}=2$,
and $(j_{1},\dots,j_{5})=(0,0,0,0,1)$. So $0=[x^{q-1}:f(x)^{k+1}]=(k+1)a_{5}=7^{-1}a_{5}$,
and thus $a_{5}=0$.

Note that $\deg(f^{k+2})=7k+14=q+8<2(q-1)$. By Hermite's criterion,
$[x^{q-1}:f(x)^{k+2}]=0$. Consider $(j_{1},j_{2},j_{3},j_{4},j_{7})\in\mathbb{N}^{5}$
with $j_{7}+\sum_{i=1}^{4}j_{i}=k+2$ and $7j_{7}+\sum_{i=1}^{4}ij_{i}=q-1=7k+5$.
Then $6j_{1}+5j_{2}+4j_{3}+3j_{4}=9$ and $(j_{1},j_{2},j_{3},j_{4})=(1,0,0,1)$,
$(0,1,1,0)$, or $(0,0,0,3)$. So 
\begin{align*}
0 & =[x^{q-1}:f(x)^{k+2}]=\binom{k+2}{k,1,1}(a_{1}a_{4}+a_{2}a_{3})+\binom{k+2}{k-1,3}a_{4}^{3}\\
 & =(k+2)(k+1)\Bigl(a_{1}a_{4}+a_{2}a_{3}+6^{-1}ka_{4}^{3}\Bigr).
\end{align*}
As $p\neq2$, we have $7a_{1}a_{4}+7a_{2}a_{3}-a_{4}^{3}=0$ for all
$q$ in this case, including $q=3^{3}$.

\section{\label{sec:Result}Non-exceptional PPs of degree $7$}

Wan \citep{Wan1993padic} proved that if the value set $\{f(c):c\in\mathbb{F}_{q}\}$
of a nonzero polynomial $f\in\mathbb{F}_{q}[x]$ contains more than
$\lfloor q-\frac{q-1}{\deg(f)}\rfloor$ distinct values, then $f$
is a PP over $\mathbb{F}_{q}$. Therefore, we can define the following
SageMath function $\mathbf{isPP}(q,a_{5},a_{4},a_{3},a_{2},a_{1})$
to check whether $x^{7}+\sum_{i=1}^{5}a_{i}x^{i}$ (with all $a_{i}\in\mathbb{F}_{q}$)
is a PP over $\mathbb{F}_{q}$ or not.

\begin{lstlisting}
def isPP(q,a5,a4,a3,a2,a1):
    E = []
    for x in list(GF(q,'e'))[0:1+int(q-(q-1)/7)]:
        v = x^7+a5*x^5+a4*x^4+a3*x^3+a2*x^2+a1*x
        if v in E: return False
        else: E.append(v)
    return True
\end{lstlisting}

\subsection{When $p$ is not $2,3,7$}

By the assumptions from Section \ref{sec:Class} and equalities in
Section \ref{sec:HC}, we can write the following SageMath function
$\mathbf{PP7}(q)$ to list all PPs of degree $7$ over $\mathbb{F}_{q}$,
up to linear transformations, for any finite field $\mathbb{F}_{q}$
of characteristic $p\notin\{2,3,7\}$ with $7<q\leqslant409$. 

\begin{lstlisting}
def PP7(q):
    F = GF(q,'e'); e = F.multiplicative_generator()
    p = F.characteristic(); qmod7 = q%7; qmod3 = q%3; qmod4 = q%4
    if qmod7==1: print("No PP for q = %d" % q); return
    if p in [2,3]: print("Work only for p not 2 or 3"); return
    if q>409: print("Work only for q <= 409"); return
    if q>p: print("The minimal polynomial of e is "+str(F.modulus()))
    CK = {2:[],3:[],4:[],5:[],6:[]}; CI = {2:[],3:[],4:[],5:[],6:[]}
    for j in [2,3,4,5,6]:
        for t in range(gcd(j,q-1)): CK[j].append(e^t)
        for t in range((q-1)/gcd(j,q-1)): CI[j].append(e^t)
    for a5 in [a5 for a5 in [0,F(1),e] if qmod7!=6 or a5==0]:
        if qmod7==5: A4 = [0]
        elif a5!=0: A4 = [0]+CI[2]
        else: A4 = [0]+CK[3]
        for a4 in A4:
            if qmod7==4: A3 = [F(7)^(-1)*2*a5^2]
            elif a5==0:
                if a4!=0: A3 = [0]+CI[3]
                else: A3 = [0]+CK[4]
            else: A3 = F
            if qmod7==3: A2 = [F(7)^(-1)*3*a4*a5]
            elif a5!=0==a4: A2 = [0]+CI[2]
            else: A2 = F
            for a3 in A3:
                if qmod7==2 and p!=5:
                    A1 = [F(7)^(-1)*(2*a3*a5 + a4^2 - F(7)^(-1)*3*a5^3)]
                else: A1 = F
                for (a2,a1) in [(a2,a1) for a2 in A2 for a1 in A1 \
                if (a5==a4==a3==0)==False or (a2==0 and a1 in [0]+CK[6])\
                    or (a2 in CK[5] and a1 in [0]+CI[5])\
                if (a5==a3==0!=a4)==False or (a2 in [0]+CI[3])\
                if (a5==a4==a2==0!=a3 and qmod4==1)==False
                    or (a1 in [0]+CI[2])]:
                    if qmod7==2 and p!=5 and \
                    0!=3430*(a1*a2*a5+a1*a3*a4)+1715*(a2*a3^2+a2^2*a4)\
                    -2205*(a2*a4^2*a5+a1*a4*a5^2+a2*a3*a5^2+a3^2*a4*a5)\
                    -735*a3*a4^3+1680*a3*a4*a5^3+840*a4^3*a5^2\
                    +420*a2*a5^4-276*a4*a5^5: continue
                    if qmod7==3 and p!=11 and 0!=4802*a1^2\
                    -4116*(a1*a3*a5+a2*a3*a4)-2058*(a2^2*a5+a1*a4^2)\
                    -686*a3^3+2940*(a3*a4^2*a5+a2*a4*a5^2)+1470*a3^2*a5^2\
                    +980*a1*a5^3+245*a4^4-1190*a4^2*a5^3-595*a3*a5^4\
                    +68*a5^6: continue
                    if qmod7==4 and p!=5 and 0!=1029*a1*a2\
                    -588*(a1*a4*a5+a2*a3*a5)-294*(a2*a4^2+a3^2*a4)\
                    +154*(a2*a5^3+a4^3*a5)+462*a3*a4*a5^2-99*a4*a5^4:
                        continue
                    if qmod7==5 and p!=3 and 0!=686*a1*a3+343*a2^2\
                    -245*(a1*a5^2+a3^2*a5)+140*a3*a5^3-19*a5^5: continue
                    if qmod7==6 and 0!=7*a1*a4+7*a2*a3-a4^3: continue
                    if isPP(q,a5,a4,a3,a2,a1): print(a5,a4,a3,a2,a1)
\end{lstlisting}

We have run $\mathbf{PP7}(q)$ in SageMath for all prime powers $q$
such that $7<q\leqslant409$ and $\mathrm{gcd}(q,42)=1$. Comparing
its outputs with the exceptional polynomials given by Corollary \ref{cor:EPpnot7},
together with Lemma \ref{lem:bound}, we get the following theorem.
\begin{thm}
\label{thm:Main}Let $f$ be a non-exceptional PP over $\mathbb{F}_{q}$
with $q>7$ and $\mathrm{gcd}(q,42)=1$. Then 
\[
q\in\{11,13,17,19,23,25,31\},
\]
and $f$ is linearly related to some $x^{7}+\sum_{i=1}^{5}a_{i}x^{i}$
with $(a_{5},a_{4},a_{3},a_{2},a_{1})\in\mathbb{F}_{q}^{5}$ listed
as follows.  
\begin{alignat*}{5}
\text{For }q=11:\  & (0,0,0,5,0), & \  & (0,0,0,8,0), & \  & (0,1,0,0,4), & \  & (0,1,0,8,5), & \  & (0,1,0,9,5),\\
 & (1,0,5,0,2), & \  & (1,0,5,0,7), & \  & (1,0,5,8,6), & \  & (1,1,5,2,5), & \  & (1,2,5,9,8),\\
 & (1,4,5,8,0), & \  & (1,8,5,8,0), & \  & (1,5,5,1,5), & \  & (2,0,9,0,8), & \  & (2,0,9,0,9),\\
 & (2,0,9,4,4), & \  & (2,1,9,5,3), & \  & (2,2,9,5,8), & \  & (2,4,9,8,3), & \  & (2,8,9,5,2),\\
 & (2,8,9,7,8), & \  & (2,8,9,8,1), & \  & (2,5,9,5,2), & \  & (2,5,9,6,1), & \  & (2,5,9,8,3).\\
\text{For }q=13:\  & (0,0,0,0,2), & \  & (0,0,0,0,6), & \  & (0,0,2,0,8), & \  & (0,0,4,0,4), & \  & (0,0,8,0,3),\\
 & (0,1,0,0,2), & \  & (0,1,1,10,5), & \  & (0,1,2,1,0), & \  & (0,1,2,3,9), & \  & (0,1,8,7,11),\\
 & (0,2,1,0,8), & \  & (0,4,0,0,6), & \  & (0,4,1,7,1), & \  & (0,4,4,3,3). & \ \\
\text{For }q=17:\  & (0,1,10,0,16), & \  & (1,0,6,0,11), & \  & (1,0,7,0,0), & \  & (1,0,13,0,7), & \  & (1,0,13,0,14),\\
 & (1,0,14,0,3), & \  & (1,3,13,11,10), & \  & (1,10,3,14,11), & \  & (3,0,7,0,4), & \  & (3,0,10,0,14),\\
 & (3,0,12,0,0), & \  & (3,0,14,0,8), & \  & (3,0,15,0,2), & \  & (3,9,11,14,10), & \  & (3,9,12,14,5),\\
 & (3,9,15,14,12), & \  & (3,15,10,12,4).\\
\text{For }q=19:\  & (0,0,0,0,16), & \  & (1,0,3,14,11), & \  & (1,0,5,0,4), & \  & (1,0,7,0,11), & \  & (1,0,11,0,16),\\
 & (1,0,18,9,4), & \  & (2,0,14,0,5), & \  & (2,0,16,0,9), & \  & (2,0,17,0,5).\\
\text{For }q=23:\  & (1,1,0,4,9), & \  & (1,5,11,5,9), & \  & (1,2,6,19,21). & \ \\
\text{For }q=31:\  & (1,0,16,0,2), & \  & (1,17,25,25,29), & \  & (3,1,14,19,10). & \ \\
\text{For }q=25:\  & (0,0,0,0,e), & \  & (0,0,0,0,e^{5}), & \  & (e,0,e^{2},0,0), & \ 
\end{alignat*}
$\qquad\qquad\qquad$where $e$ is a root of $x^{2}+4x+2$ in $\mathbb{F}_{5^{2}}$,
namely, $e^{2}=e+3\in\mathbb{F}_{5^{2}}$.
\end{thm}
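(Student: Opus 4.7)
The plan is to reduce the theorem to a finite, explicit computer enumeration and then verify that this enumeration has been correctly carried out by the function $\mathbf{PP7}$. The skeleton is as follows.

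First I would invoke Lemma \ref{lem:bound} to conclude that a non-exceptional PP of degree $7$ over $\mathbb{F}_q$ can exist only for $q\leqslant 409$. Combined with the hypothesis $\gcd(q,42)=1$ (which rules out $p\in\{2,3,7\}$ and matches the running assumptions of Section \ref{sec:Class} and Section \ref{sec:HC}), this leaves finitely many $q$ to consider, each a prime power with $q\not\equiv 1\ (\mathrm{mod}\ 7)$ (since $q\equiv 1$ forces no nontrivial case via the output message in $\mathbf{PP7}$). Next I would appeal to Proposition \ref{prop:Eq-all}: up to linear equivalence, every candidate PP is of the normalized shape $f(x)=x^7+\sum_{i=1}^{5}a_i x^i$ with the listed constraints on $(a_5,a_4,a_3,a_2,a_1)$ in terms of the coset representative sets $\mathrm{CK}_q(m)$ and $\mathrm{CI}_q(m)$. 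These constraints are faithfully encoded in the inner loop of $\mathbf{PP7}(q)$.

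Next I would exploit the Hermite-criterion identities derived case by case in Section \ref{sec:HC}, one per residue class $q\bmod 7\in\{2,3,4,5,6\}$. In each class the identity obtained from $[x^{q-1}:f^{k+1}]=0$ either kills one of the coefficients outright (e.g.\ $a_5=0$ when $q\equiv 6$, $a_4=0$ when $q\equiv 5$) or expresses it as an explicit polynomial in the higher-indexed $a_i$, and the identity from $[x^{q-1}:f^{k+2}]=0$ yields a further polynomial constraint. These are exactly the additional tests inside $\mathbf{PP7}$. Once these are imposed, the remaining space of $(a_5,a_4,a_3,a_2,a_1)$ is small enough to exhaust by brute force, and I would apply the function $\mathbf{isPP}$ (justified by Wan's value-set inequality \citep{Wan1993padic}) to check the PP property for each surviving tuple.

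At this point the output of $\mathbf{PP7}(q)$ over all admissible $q\leqslant 409$ is a finite list of PPs, each in a canonical normalized form. The final step is to intersect the complement of the list of exceptional PPs (given by Corollary \ref{cor:EPpnot7}: always $x^7$ when $q\equiv 6\ (\mathrm{mod}\ 7)$, and additionally the two Dickson polynomials $D_7(x,-7^{-1})$ and $D_7(x,-7^{-1}e)$ when $q\not\equiv\pm 1\ (\mathrm{mod}\ 7)$) with the computed list, and observe that the remaining tuples are exactly those tabulated for $q\in\{11,13,17,19,23,25,31\}$; for all other $q$ in the search range the remaining list is empty, yielding the stated restriction on $q$.

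The main obstacle is twofold. Conceptually, it is ensuring that the case distinctions in $\mathbf{PP7}$ mirror exactly the hypotheses under which each Hermite identity in Section \ref{sec:HC} was proved, in particular the small-characteristic exclusions ($p\neq 5$ when $q\equiv 2\ (\mathrm{mod}\ 7)$, $p\neq 11$ when $q\equiv 3$, $p\neq 5$ when $q\equiv 4$, $p\neq 3$ when $q\equiv 5$), so that no genuine PP is accidentally discarded. Computationally, the search is feasible only because the assumptions of Proposition \ref{prop:Eq-all} drastically shrink the coset of admissible leading coefficients and because the two Hermite equalities cut down the remaining parameters to a manageable size; the correctness of the output therefore rests on verifying that the encoded constraints are sufficient (not just necessary), which is guaranteed precisely because the final $\mathbf{isPP}$ call checks the PP property directly rather than relying on further congruences.
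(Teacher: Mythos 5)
Your proposal follows essentially the same route as the paper: bound $q\leqslant 409$ via Lemma \ref{lem:bound}, normalize via Proposition \ref{prop:Eq-all}, prune with the Hermite-criterion identities of Section \ref{sec:HC}, exhaust the remaining tuples with $\mathbf{PP7}$ and $\mathbf{isPP}$ (justified by Wan's value-set bound), and finally remove the exceptional polynomials of Corollary \ref{cor:EPpnot7}. The only cosmetic quibble is that the exclusion of $q\equiv 1\ (\mathrm{mod}\ 7)$ should be credited to the classical consequence of Hermite's criterion (no PP of degree $d$ exists when $d\mid q-1$) rather than to the code's output message, but this does not affect correctness.
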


\subsection{When $p=3$}

Suppose $7<q=3^{r}\leqslant409$, i.e. $r\in\{2,3,4,5\}$ and $q\in\{9,27,81,243\}$.
Recall the equalities obtained in Section \ref{sec:HC} by Hermite\textquoteright s
criterion for coefficients $a_{i}\in\mathbb{F}_{q}$ of a PP $f(x)=x^{7}+\sum_{i=1}^{5}a_{i}x^{i}$
over $\mathbb{F}_{q}$.

For $q=3^{2}$, $a_{1}=7^{-1}(2a_{3}a_{5}+a_{4}^{2}-7^{-1}\cdot3a_{5}^{3})$.

For $q=3^{3}$, $0=a_{5}=7a_{1}a_{4}+7a_{2}a_{3}-a_{4}^{3}$.

For $q=3^{4}$, $0=a_{2}a_{5}^{3}+a_{4}^{3}a_{5}=a_{5}^{9}+a_{3}^{3}a_{4}^{2}-a_{2}a_{3}a_{4}^{3}-a_{1}a_{4}^{4}-a_{3}^{4}a_{5}+a_{1}^{2}a_{5}^{3}-a_{1}a_{3}^{3}-a_{2}^{3}a_{4}+a_{1}^{3}$.

For $q=3^{5}$, $0=a_{4}=a_{5}(a_{1}+a_{3}a_{5})$.

Therefore, we can modify the SageMath codes of $\mathbf{PP7}$ into
$\mathbf{PP7p3}$ to  list all PPs of degree $7$ over $\mathbb{F}_{q}$
up to linear transformations for $q\in\{3^{2},3^{3},3^{4},3^{5}\}$.

\begin{lstlisting}
def PP7p3(q):
    F = GF(q,'e'); e = F.multiplicative_generator();qmod4 = q%4
    print("The minimal polynomial of e is "+str(F.modulus()))
    CK = {2:[],3:[],4:[],5:[],6:[]}; CI = {2:[],3:[],4:[],5:[],6:[]}
    for j in [2,3,4,5,6]:
        for t in range(gcd(j,q-1)): CK[j].append(e^t)
        for t in range((q-1)/gcd(j,q-1)): CI[j].append(e^t)
    for a5 in [a5 for a5 in [0,F(1),e] if q!=3^3 or a5==0]:
        if q==3^5: A4 = [0]
        elif a5!=0: A4 = [0]+CI[2]
        else: A4 = [0]+CK[3]
        for a4 in A4:
            if a5==0:
                if a4!=0: A3 = [0]+CI[3]
                else: A3 = [0]+CK[4]
            else: A3 = F
            if a5!=0==a4: A2 = [0]+CI[2]
            else: A2 = F
            for a3 in A3:
                if q==3^2:
                    A1 = [F(7)^(-1)*(2*a3*a5 + a4^2 - F(7)^(-1)*3*a5^3)]
                else: A1 = F
                for (a2,a1) in [(a2,a1) for a2 in A2 for a1 in A1 \
                 if (a5==a4==a3==0)==False or (a2==0 and a1 in [0]+CK[6])\
                  or (a2 in CK[5] and a1 in [0]+CI[5])\
                 if (a5==a3==0!=a4)==False or (a2 in [0]+CI[3])\
                 if (a5==a4==a2==0!=a3 and qmod4==1)==False\
                  or (a1 in [0]+CI[2])]:
                    if q==3^3 and 0!=7*a1*a4+7*a2*a3-a4^3: continue
                    if q==3^4 and (0!=a2*a5^3+a4^3*a5 or 0!=\
                     a5^9+a3^3*a4^2-a2*a3*a4^3-a1*a4^4-a3^4*a5+a1^2*a5^3\
                     -a1*a3^3-a2^3*a4+a1^3): continue
                    if q==3^5 and 0!=a5*(a1+a3*a5): continue
                    if isPP(q,a5,a4,a3,a2,a1): print(a5,a4,a3,a2,a1)
\end{lstlisting}

The outputs of $\mathbf{PP7p3}(q)$ for $q\in\{3^{2},3^{3},3^{4},3^{5}\}$
and Corollary \ref{cor:EPpnot7} gives the Proposition \ref{prop:p3}.
\begin{prop}
\label{prop:p3} (1) Let $e$ be a root of $x^{2}+2x+2$ in $\mathbb{F}_{9}$
(namely, $e^{2}=e+1$). All non-exceptional PPs over $\mathbb{F}_{9}$
are linearly related to some $x^{7}+\sum_{i=1}^{5}a_{i}x^{i}$ with
$(a_{5},a_{4},a_{3},a_{2},a_{1})\in\mathbb{F}_{9}^{5}$ listed as
follows: 
\begin{alignat*}{5}
 & (0,0,e^{2},0,0), & \  & (0,1,e,1,1), & \  & (0,1,e^{2},e,1), & \  & (0,1,e^{2},2e,1), & \  & (0,1,e^{3},1,1),\\
 & (0,1,2,2,1), &  & (0,1,2e^{2},e^{3},1), &  & (0,1,2e^{2},2e^{3},1), &  & (1,0,e,0,2e), &  & (1,0,e^{3},0,2e^{3}),\\
 & (1,0,2e,0,e), &  & (1,0,2e^{3},0,e^{3}), &  & (1,0,1,0,2), &  & (1,e,e,2e^{2},1), &  & (1,e,e,1,1),\\
 & (1,e,e^{2},2,0), &  & (1,e,e^{2},1,0), &  & (1,e,e^{3},e^{2},2e), &  & (1,e,e^{3},2e^{2},2e), &  & (1,e,1,e^{2},e),\\
 & (1,e,1,2,e), &  & (1,e^{2},0,e^{2},2), &  & (1,e^{2},e,2,2e^{2}), &  & (1,e^{2},e^{2},1,e^{3}), &  & (1,e^{2},e^{3},1,e^{2}),\\
 & (1,e^{2},2,e^{2},0), &  & (1,e^{2},2e^{2},2,e), &  & (1,e^{3},e,e^{2},2e^{3}), &  & (1,e^{3},e,2e^{2},2e^{3}), &  & (1,e^{3},e^{3},e^{2},1),\\
 & (1,e^{3},e^{3},1,1), &  & (1,e^{3},2e^{2},2,0), &  & (1,e^{3},2e^{2},1,0), &  & (1,e^{3},1,2,e^{3}), &  & (1,e^{3},1,2e^{2},e^{3}),\\
 & (e,0,0,0,0), &  & (e,0,e,1,2e^{2}), &  & (e,0,e,e^{2},2e^{2}), &  & (e,0,e^{2},0,2e^{3}), &  & (e,0,2e^{3},e,2),\\
 & (e,0,2e^{3},e^{3},2), &  & (e,1,0,2,1), &  & (e,1,e^{2},2e^{2},e), &  & (e,1,e^{3},e^{3},2), &  & (e,1,e^{3},2e^{3},2),\\
 & (e,1,2,e,e^{2}), &  & (e,1,2,e+2,e^{2}), &  & (e,1,2e^{2},2e^{2},2e^{2}), &  & (e,e,0,e^{3},e^{2}), &  & (e,e,e^{2},e,2e),\\
 & (e,e,2e,2,2e^{2}), &  & (e,e,2e,1,2e^{2}), &  & (e,e,2e^{2},e,2), &  & (e,e,1,e^{2},1), &  & (e,e,1,2,1),\\
 & (e,e^{2},e,e^{2},e^{3}), &  & (e,e^{2},e^{3},2,0), &  & (e,e^{2},2,2e^{2},2e^{3}), &  & (e,e^{2},2e,e^{3},e), &  & (e,e^{2},2e,2e,e),\\
 & (e,e^{3},e^{3},e^{2},2e), &  & (e,e^{3},e^{3},2,2e), &  & (e,e^{3},2e,2e^{3},0), &  & (e,e^{3},2e^{3},e,e^{3}), &  & (e,e^{3},1,2e,2e^{3}).
\end{alignat*}

(2) All non-exceptional PPs over $\mathbb{F}_{27}$ are linearly related
to $x^{7}-x^{3}+x$. 

(3) All PPs over $\mathbb{F}_{81}$ are exceptional.
\end{prop}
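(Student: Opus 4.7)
The plan is to follow the same computer-assisted template used for Theorem \ref{thm:Main}, specialized to characteristic $3$. Since $\mathrm{gcd}(q,7)=1$ for each $q\in\{9,27,81,243\}$, Proposition \ref{prop:Eq-all} lets me restrict the search to normalized polynomials $f(x)=x^{7}+\sum_{i=1}^{5}a_{i}x^{i}$ whose leading nonzero coefficient $a_{k}$ lies in $\mathrm{CK}_{q}(7-k)$ and whose later coefficients obey the coimage restrictions recorded there. On top of this I would impose the Hermite-criterion equalities collected immediately before the proposition, which are specializations of the Section \ref{sec:HC} identities to the residues $9\equiv 2$, $27\equiv 6$, $81\equiv 4$, and $243\equiv 5$ $(\mathrm{mod}\ 7)$, trimmed so that no factor of the multinomial coefficients is illegitimately cancelled in characteristic $3$.

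With both reductions in place, the function $\mathbf{PP7p3}(q)$, which is essentially $\mathbf{PP7}$ with its residue-class equalities replaced by the $p=3$ versions derived just above, enumerates every admissible tuple and hands it to $\mathbf{isPP}$, which decides PP-ness via Wan's value-set bound. I would run $\mathbf{PP7p3}(q)$ for each $q\in\{9,27,81,243\}$, collect the outputs, and strike from each list the polynomials linearly related to an exceptional polynomial of degree $7$ as classified by Corollary \ref{cor:EPpnot7}: for $q=27\equiv 6$ $(\mathrm{mod}\ 7)$ only $x^{7}$ needs removing, whereas for the other three fields a short explicit family of Dickson-type polynomials must be excised.

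What remains is matching outputs to the statement: the non-exceptional survivors for $q=9$ form exactly the table in part~(1); for $q=27$ a single linear-equivalence class remains, represented by $x^{7}-x^{3}+x$, giving part~(2); and for $q=81$ every survivor turns out to be one of the Dickson polynomials of Corollary \ref{cor:EPpnot7}, proving part~(3). The main obstacle I anticipate is bookkeeping rather than mathematics: I must verify that the coset-representative and Hermite conditions encoded in $\mathbf{PP7p3}$ faithfully transcribe Proposition \ref{prop:Eq-all} and the Section \ref{sec:HC} identities at $p=3$ (in particular, that factors such as $(k+2)(k+1)$ which can vanish for small $k$ are not silently divided out), and that the final comparison with Corollary \ref{cor:EPpnot7} is exhaustive up to linear relations.
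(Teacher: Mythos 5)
Your proposal matches the paper's proof essentially verbatim: restrict the search space via Proposition \ref{prop:Eq-all}, impose the characteristic-$3$ Hermite equalities recorded just before the proposition (including the separately computed ones for $q=81$ and $q=243$ where the generic identities degenerate), enumerate with $\mathbf{PP7p3}$ and $\mathbf{isPP}$, and remove the exceptional classes identified by Corollary \ref{cor:EPpnot7}. The caveats you flag (non-invertible factors such as $(k+2)(k+1)k$ in characteristic $3$, and exhaustiveness of the comparison with the Dickson polynomials) are exactly the points the paper handles, so no gap remains.
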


\subsection{When $p=7$}

Suppose $7<q=7^{r}\leqslant409$, i.e. $r\in\{2,3\}$ and $q\in\{49,343\}$.
By Proposition \ref{prop:Eq-7} and \ref{prop:Eq-all}, each PP of
degree $7$ over $\mathbb{F}_{7^{r}}$ is linearly related to some
$f(x)=x^{7}+\sum_{i=1}^{5}a_{i}x^{i}$ with all $a_{i}\in\mathbb{F}_{7^{r}}$
satisfying the following requirements:
\begin{itemize}
\item $a_{5}\in\{0,1,e\}$, $a_{4}\in\{0,1,e,e^{2}\}$, $a_{5}a_{4}=a_{4}a_{3}=0$;
\item If $a_{5}=a_{4}=0\neq a_{3}$, then $a_{3}\in\begin{cases}
\{1,e,e^{2},e^{3}\} & \text{if }q=7^{2},\\
\{1,e\} & \text{if }q=7^{3},
\end{cases}$ and $a_{2}=0$.
\item If $a_{5}=a_{4}=a_{3}=0\neq a_{2}$, then $a_{2}=1$ and $a_{1}=0$.
\item If $a_{5}=a_{4}=a_{3}=a_{2}=0$, then $a_{1}\in\{0,1,e,e^{2},e^{3},e^{4},e^{5}\}$.\end{itemize}
\begin{prop}
Let $f(x)=x^{7}+a_{5}x^{5}+a_{3}x^{3}+a_{2}x^{2}+a_{1}x$ be a PP
over $\mathbb{F}_{7^{r}}$ with $r\in\{2,3\}$ and all $a_{i}\in\mathbb{F}_{7^{r}}$
and $a_{5}\ne0$. Then $a_{1}=3(a_{3}a_{5}+a_{3}^{2}a_{5}^{-1})$.\end{prop}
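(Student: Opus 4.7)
The plan is to apply Hermite's criterion to $f^{k}$ for a carefully chosen exponent $k$ coprime to $p=7$, designed so that the vanishing $[x^{q-1}:f^{k}]=0$ reduces to a single scalar multiple of $[x^{13}:f^{3}]$. The structural tool is the Frobenius identity in characteristic $7$,
\[
f(x)^{7^{i}}=x^{7^{i+1}}+a_{5}^{7^{i}}x^{5\cdot7^{i}}+a_{3}^{7^{i}}x^{3\cdot7^{i}}+a_{2}^{7^{i}}x^{2\cdot7^{i}}+a_{1}^{7^{i}}x^{7^{i}},
\]
whose support is concentrated on only the five exponents $7^{i}\cdot\{1,2,3,5,7\}$; this sparsity will force a single term to survive in the relevant coefficient expansion.

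For $r=2$ (so $q-1=48$), I take $k=10=3+7$, giving $f^{10}=f^{3}\cdot f^{7}$. Since the support of $f^{3}$ lies in $[3,21]$ and $f^{7}$ is supported on $\{7,14,21,35,49\}$, the unique pair $(j_{0},j_{1})$ with $j_{0}+j_{1}=48$ and both terms in their respective supports is $(13,35)$, so $[x^{48}:f^{10}]=a_{5}^{7}\cdot[x^{13}:f^{3}]$. For $r=3$ (so $q-1=342$), I take $k=66=3+2\cdot7+49$, giving $f^{66}=f^{3}\cdot(f^{7})^{2}\cdot f^{49}$. A support check against $[3,21]$, $\{14,21,28,35,42,49,56,63,70,84,98\}$, and $\{49,98,147,245,343\}$ respectively singles out the unique triple $(j_{0},j_{1},j_{2})=(13,84,245)$ summing to $342$, whence $[x^{342}:f^{66}]=[x^{13}:f^{3}]\cdot2a_{5}^{7}\cdot a_{5}^{49}=2a_{5}^{56}\cdot[x^{13}:f^{3}]$. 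In both cases $7k<2(q-1)$, so Hermite's criterion yields the single-term equation $[x^{q-1}:f^{k}]=0$.

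Since $a_{5}\neq0$ (and $2\neq0$ in characteristic $7$), I conclude $[x^{13}:f^{3}]=0$. Enumerating the unordered triples of elements of $\{1,2,3,5,7\}$ summing to $13$, namely $\{1,5,7\}$, $\{3,3,7\}$, and $\{3,5,5\}$, gives $[x^{13}:f^{3}]=6a_{1}a_{5}+3a_{3}^{2}+3a_{3}a_{5}^{2}$. Using $6\equiv-1\pmod{7}$, the vanishing rearranges to $a_{1}a_{5}=3(a_{3}a_{5}^{2}+a_{3}^{2})$, and dividing through by $a_{5}$ yields $a_{1}=3(a_{3}a_{5}+a_{3}^{2}a_{5}^{-1})$, as claimed.

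The main obstacle I expect is the support analysis for $r=3$: verifying that no triple other than $(13,84,245)$ meets all three support constraints simultaneously amounts to a short enumeration over the five possible values of $j_{2}$, but one must carry it out carefully to be sure no extraneous contribution clouds the isolation of $[x^{13}:f^{3}]$. The remaining bookkeeping — confirming $k\in[1,q-2]$, $\gcd(k,7)=1$, and $7k<2(q-1)$ for both $k=10$ and $k=66$ — is routine.
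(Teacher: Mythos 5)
Your proposal is correct and follows essentially the same route as the paper: both apply Hermite's criterion with the same exponents $k=10$ for $q=7^{2}$ and $k=66$ for $q=7^{3}$, and your computed coefficients $[x^{48}:f^{10}]=a_{5}^{7}(6a_{1}a_{5}+3a_{3}^{2}+3a_{3}a_{5}^{2})$ and $[x^{342}:f^{66}]=2a_{5}^{56}(6a_{1}a_{5}+3a_{3}^{2}+3a_{3}a_{5}^{2})$ agree exactly with the two identities the paper states without derivation. Your Frobenius factorization $f^{k}=f^{3}\cdot(f^{7})^{j_1}\cdot(f^{49})^{j_2}$ according to the base-$7$ digits of $k$ is a nice touch that makes the coefficient extraction verifiable by hand rather than by machine, and all the support checks and arithmetic in characteristic $7$ are accurate.
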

\begin{proof}
By Hermite\textquoteright s Criterion, we have 
\begin{alignat*}{3}
0 & =[x^{48}:f^{10}] &  & =3a_{3}a_{5}^{9}+3a_{3}^{2}a_{5}^{7}-a_{1}a_{5}^{8}, & \quad & \text{if }r=2,\\
0 & =[x^{342}:f^{66}] &  & =-a_{3}a_{5}^{58}-a_{3}^{2}a_{5}^{56}-2a_{1}a_{5}^{57}, &  & \text{if }r=3,
\end{alignat*}
so $a_{1}=3(a_{3}a_{5}+a_{3}^{2}a_{5}^{-1})$ in both cases.
\end{proof}
Therefore, for $r\in\{2,3\}$, the following SageMath codes runs
to list all the PPs over $\mathbb{F}_{7^{r}}$ up to linear transformatons.

\begin{lstlisting}
for q in [7^2,7^3]:
    F=GF(q,'e'); e=F.multiplicative_generator()
    print("For q = %s, e is a root of" % (q)),; print(F.modulus())
    for a5 in [0,1,e]:
        for a4 in [a4 for a4 in [0,1,e,e^2] if a4*a5==0]:
            if a4!=0: A3=[0]
            elif a5==0 and q==7^2: A3=[0,1,e,e^2,e^3]
            elif a5==0 and q==7^3: A3=[0,1,e]
            else: A3=F
            for a3 in A3:
                if a5==a4==0: A2=[a2 for a2 in [0,1] if a2*a3==0]
                else: A2=F
                for a2 in A2:
                    if a5!=0: A1=[3*(a3*a5+a3^2/a5)]
                    elif a4==a3==0:
                        if a2==0: A1=[0,1,e,e^2,e^3,e^4,e^5]
                        else: A1=[0]
                    else: A1=F
                    for a1 in A1:
                        if isPP(q,a5,a4,a3,a2,a1): print(a5,a4,a3,a2,a1)
\end{lstlisting}

It is easy to check which polynomials in the outputs are exceptional
by Lemma \ref{lem:EP7p7}. Then we can reword the outputs as the following
two propositions.
\begin{prop}
\label{prop:p7} Fix a root $e$ of $x^{2}+6x+3$ in $\mathbb{F}_{7^{2}}$.
All non-exceptional PPs over $\mathbb{F}_{7^{2}}$ are linearly related
to 
\[
x^{7}+ex^{5}+e^{18}x^{3}+e^{35}x.
\]
Each exceptional polynomial over $\mathbb{F}_{7^{2}}$ is linearly
related to exactly one of the following:\begin{itemize}

\item $x^{7}+ax$ with $a\in\{0,e,e^{2},e^{3},e^{4},e^{5}\}$;

\item $x^{7}+x^{4}+2x$, $x^{7}+e^{2}x^{4}+2e^{4}x$;

\item $x^{7}+ex^{5}+5e^{2}x^{3}+6e^{3}x$.\end{itemize}
\end{prop}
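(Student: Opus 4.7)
The plan is to combine the reduction from Propositions \ref{prop:Eq-7} and \ref{prop:Eq-all} to restrict attention to normalized candidates $f(x) = x^{7} + \sum_{i=1}^{5} a_i x^i \in \mathbb{F}_{49}[x]$ with coefficients obeying the explicit constraints listed just before the statement: $a_5 \in \{0, 1, e\}$, $a_4 \in \{0, 1, e, e^2\}$, $a_5 a_4 = a_4 a_3 = 0$, together with the analogous constraints on $a_3, a_2, a_1$ when higher coefficients vanish. Note that $a_6 = 0$ automatically by Lemma \ref{lem:a6zero}.

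Next, apply Hermite's criterion to shrink the remaining parameter space further. In the case $a_5 \neq 0$, the proposition immediately preceding this one uses $[x^{48}:f^{10}] = 0$ to yield the identity $a_1 = 3(a_3 a_5 + a_3^2 a_5^{-1})$, determining $a_1$ from $(a_3, a_5)$. After this reduction the candidate set is small and finite, and the SageMath loop quoted in the excerpt iterates over it, applying the $\mathbf{isPP}$ test (justified by Wan's value-set bound) to each candidate. The routine outputs the complete list of normalized PPs of degree $7$ over $\mathbb{F}_{49}$ up to linear equivalence.

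The final step is to sift this output using Lemma \ref{lem:EP7p7}: for $p = 7$ and $r = 2$, each exceptional polynomial of degree $7$ over $\mathbb{F}_{49}$ is linearly related either to $x^7$ or to $x(x^{6/s} - a)^s$ with $s \in \{1,2,3\}$ and $a \in \mathbb{F}_{49}^{*}$ satisfying $a^{8s} \neq 1$. I would expand each exceptional template in normalized form, scan the representatives $a$ (working modulo the stabilizer $a \mapsto t^6 a$ for $t \in \mathbb{F}_{49}^*$), and match them against the SageMath output. The expected result is that all but one output polynomial are exceptional and distribute into the three families listed in the statement ($s=1$ giving the six $x^7 + ax$ forms, $s=3$ giving the two $x^7 + cx^4 + dx$ forms, and $s=2$ giving the single $x^7 + ex^5 + 5e^2 x^3 + 6e^3 x$), while the remaining output representative is $x^7 + ex^5 + e^{18} x^3 + e^{35} x$, which is non-exceptional.

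The main obstacle is the linear-equivalence bookkeeping required to make the ``exactly one of'' statement rigorous. By Proposition \ref{prop:Eq-cop7}, equivalence among normalized degree-$7$ polynomials reduces to the scaling action $(a_1, a_2, a_3, a_4, a_5) \mapsto (t^6 a_1, t^5 a_2, t^4 a_3, t^3 a_4, t^2 a_5)$ for $t \in \mathbb{F}_{49}^{*}$. I would verify, by a finite check over the $48$ values of $t$, that no two polynomials on the stated lists lie in the same orbit and that every SageMath output polynomial lies in exactly one listed orbit; this is tedious but fully computational, so the real technical weight sits in the preceding Hermite reduction and the correctness of the enumeration loop, not in this final verification.
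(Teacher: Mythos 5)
Your proposal matches the paper's argument essentially step for step: reduction to normalized candidates via Propositions \ref{prop:Eq-7} and \ref{prop:Eq-all}, the Hermite constraint $a_{1}=3(a_{3}a_{5}+a_{3}^{2}a_{5}^{-1})$ from the preceding proposition, exhaustive SageMath enumeration with the Wan-bound $\mathbf{isPP}$ test, and a final sift against Lemma \ref{lem:EP7p7}. The only quibble is that for $q=49$ the linear-equivalence bookkeeping should cite Proposition \ref{prop:Eq-7} rather than Proposition \ref{prop:Eq-cop7} (which assumes $7\nmid q$), though the resulting scaling action $a_{i}\mapsto t^{7-i}a_{i}$ is the same.
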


\begin{prop}
Fix a root $e$ of $x^{3}+6x^{2}+4$ in $\mathbb{F}_{7^{3}}$. Each
PP over $\mathbb{F}_{7^{3}}$ is exceptional, and linearly related
to exactly one of the following polynomials:\begin{itemize}

\item $x^{7}+bx$ with $b\in\{0,1,e,e^{2},e^{4},e^{5}\}$;

\item $x^{7}+ex^{4}+2e^{2}x$, $x^{7}+e^{2}x^{4}+2e^{4}x$;

\item $x^{7}+ex^{5}+5e^{2}x^{3}+6e^{3}x$.\end{itemize}
\end{prop}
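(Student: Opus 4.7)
The plan is to mirror the structure of the preceding subsection and carry out an exhaustive computer search, then sort the resulting list by the exceptional/non-exceptional dichotomy. First I would invoke Propositions \ref{prop:Eq-7} and \ref{prop:Eq-all} to reduce the problem: up to linear transformations, every PP of degree $7$ over $\mathbb{F}_{7^{3}}$ is either $x^{7}$ itself or has the normalized form $f(x)=x^{7}+\sum_{i=1}^{k}a_{i}x^{i}$ with $1\leqslant k\leqslant5$ and coefficients $(a_{5},a_{4},a_{3},a_{2},a_{1})$ subject to the bulleted constraints already spelled out right before the statement (vanishing of $a_{6}$ by Lemma \ref{lem:a6zero}, the cascade $a_{5}a_{4}=a_{4}a_{3}=0$, the normalizations of $a_{3}$, $a_{2}$, $a_{1}$ modulo $\theta_{m}(\mathbb{F}_{q}^{*})$, and so on).

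Next I would apply the Hermite equality just proved, namely $a_{1}=3(a_{3}a_{5}+a_{3}^{2}a_{5}^{-1})$ whenever $a_{5}\neq 0$, which collapses the last free coordinate in the most populous branch of the search tree. After this, the remaining search space is the finite set traversed by the displayed SageMath loop with $q=7^{3}$, and I would simply run that loop. Each candidate tuple is tested by the function $\mathbf{isPP}$, which is valid by Wan's bound since the value set of a non-PP of degree $7$ omits at least one element among the first $\lfloor q-(q-1)/7\rfloor+1$ field elements enumerated.

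The output of this search produces a finite list of polynomials $f$, and the remaining task is to separate the exceptional from the non-exceptional ones and to verify that the list in the proposition gives irredundant representatives for the linear-equivalence classes. For each $f$ in the output I would use Lemma \ref{lem:EP7p7}: exceptional polynomials of degree $7$ over $\mathbb{F}_{7^{3}}$ are linearly related to $x^{7}$ or to $x(x^{6/s}-a)^{s}$ with $s\in\{1,2,3\}$ and $a^{s(7^{3}-1)/6}\neq 1$. Expanding these forms, computing their normalized representatives under the explicit action $(a_{5},a_{4},a_{3},a_{2},a_{1})\mapsto(t^{2}a_{5},t^{3}a_{4},t^{4}a_{3},t^{5}a_{2},t^{6}a_{1})$ with $t\in\mathbb{F}_{7^{3}}^{*}$ from Propositions \ref{prop:Eq-cop7} and \ref{prop:Eq-7}, and matching them against the search output, would show that every output polynomial is exceptional (so no non-exceptional PP exists over $\mathbb{F}_{7^{3}}$) and that the three families listed exhaust the normalized orbits.

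The main obstacle I anticipate is the last matching step rather than the search itself. The family $x(x^{6/s}-a)^{s}$ for $s=1,2,3$ produces many coefficient tuples parametrized by $a\in\mathbb{F}_{7^{3}}^{*}$, and I would need to pass from this parametrization to the canonical representatives $(a_{5},a_{4},a_{3},a_{2},a_{1})$ in $\mathrm{CK}_{q}(7-k)\times(\{0\}\cup\mathrm{CI}_{q}(7-k))\times\cdots$, accounting for the constraint $a^{s(7^{3}-1)/6}\neq 1$ that cuts down the set of admissible $a$. Concretely, $s=1$ gives the six normalized Dickson-type linear polynomials $x^{7}+bx$, the cases $s=2,3$ give the two quartic and one quintic polynomials listed, and checking that the exclusion $a^{s(7^{r}-1)/6}\neq1$ leaves exactly these orbits is the bookkeeping that needs to be done carefully to conclude ``exactly one of.''
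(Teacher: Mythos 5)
Your proposal follows essentially the same route as the paper: reduce via Propositions \ref{prop:Eq-7} and \ref{prop:Eq-all} together with the listed coefficient constraints, impose the Hermite equality $a_{1}=3(a_{3}a_{5}+a_{3}^{2}a_{5}^{-1})$ when $a_{5}\neq0$, run the exhaustive SageMath search with $\mathbf{isPP}$ justified by Wan's bound, and then identify the outputs with the exceptional families of Lemma \ref{lem:EP7p7} up to the explicit scaling action. This matches the paper's argument, including the final orbit-matching bookkeeping needed for the ``exactly one'' claim.
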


\end{document}